\newtheorem{theorem}{Theorem}[section]
\newtheorem{lemma}[theorem]{Lemma}
\newtheorem{cor}[theorem]{Corollary}
\newtheorem{prop}[theorem]{Proposition}
\newtheorem{conj}{Conjecture}
\theoremstyle{definition}
\newtheorem{definition}[theorem]{Definition}
\theoremstyle{remark}
\numberwithin{equation}{section}
\newcommand{\Ref}[1]{(\ref{#1})}
\newcommand{\nn}{\nonumber \\}
\newcommand{\ds}{\displaystyle}
\newcommand{\abar}{a^{-1}}
\newcommand{\bbar}{b^{-1}}
\newcommand{\qbar}{q^{-1}}
\newcommand{\suba}[1]{\langle a^{#1} \rangle}
\newcommand{\Ex}{\mathbb{E}}
\def\W#1{\widetilde{#1}}
\newcommand{\LS}{MR0577064}
\newcommand{\AGLP}{MR2473819}
\newcommand{\Cohen}{Cohen}
\newcommand{\DykemaBS}{DykemaBS}
\newcommand{\Flajolet}{Flajolet}
\newcommand{\BartV}{MR2176547}
\newcommand{\Grig}{Grig}
\newcommand{\Kouksov}{MR1689726}
\newcommand{\ERW}{ERW}
\newcommand{\KouksovB}{MR1487319}
\newcommand{\BurClearyWiest}{MR2395786}
\newcommand{\GStald}{MR2415304}
\newcommand{\Mann}{MR2894945}
\newcommand{\Wagon}{MR1251963}
\newcommand{\MadrasSlade}{MR1197356}
\newcommand{\BergF}{MR634931}
\newcommand{\CarvCaracc}{MR702248}
\newcommand{\CarvEtAl}{MR690735}
\newcommand{\BuksMC}{janse2009monte}
\newcommand{\Metropolis}{metropolis}
\newcommand{\DeGennes}{DeGennes}
\newcommand{\Flory}{flory}
\newcommand{\TesiMonte}{tesimonte}
\newcommand{\Geyer}{geyer}
\newcommand{\Lips}{lipshitz}
\newcommand{\Stanley}{stanley1980}
\newcommand{\Kauers}{kauers}
\newcommand{\KauersGuess}{kauersguess}
\newcommand{\FlajoletContFrac}{flajoletcontfrac}
\newcommand{\Bellman}{bellman}
\newcommand{\Burillo}{burillo2001}
\newcommand{\Hammersley}{hammersley1982}
\newcommand{\BB}{MR2197808}
\newcommand{\TatchMoore}{TatchMoore}
\newcommand{\DykemaRW}{MR2216708}
\newcommand{\BartVirag}{MR2176547}
\newcommand{\BartKaimNekrash}{MR2730578}
\newcommand{\BartWoess}{MR2131635}
\newcommand{\Lalley}{MR2275700}
\newcommand{\NagA}{MR2087798}
\newcommand{\NagB}{MR1691645}
\newcommand{\DiacSCa}{MR1650316}
\newcommand{\DiacSCb}{MR1414925}
\newcommand{\DiacSCd}{MR1254308}
\newcommand{\DiacSCe}{MR1245303}
\newcommand{\DiacSCf}{MR1233621}
\newcommand{\OrtnerWoess}{MR2338235}
\newcommand{\Woess}{MR731608}
\begin{document}


\title{On trivial words in finitely presented groups}

\author[M. Elder]{M. Elder}
\address{School of Mathematical \& Physical Sciences, The~University~of~Newcastle, Callaghan, New South Wales, Australia}
\email{murray.elder@newcastle.edu.au}
\author[A. Rechnitzer]{A. Rechnitzer}
\address{Department of Mathematics, University of British Columbia, Vancouver, British Columbia, Canada}
\email{andrewr@math.ubc.ca}
\author[E.J.~Janse~van~Rensburg]{E.J.~Janse~van~Rensburg}
\address{York University, Toronto, Ontario, Canada}
\email{rensburg@mathstat.yorku.ca}
\author[T. Wong]{T. Wong}
\address{Department of Mathematics, University of British Columbia, Vancouver, British Columbia, Canada}
\email{twong@math.ubc.ca}

\subjclass[2010]{20F69,	20F65,  05A15, 60J20}

\keywords{Cogrowth; amenable group; Metropolis algorithm; 
Baumslag-Solitar group; cogrowth series}

\date{\today}

\dedicatory{To the memory of Herb Wilf, Pierre Leroux and Philippe Flajolet ---\\
all great men of generating functions.}

\begin{abstract}
We propose a numerical method for studying the cogrowth of finitely presented
groups. To validate our numerical results we compare them against the
corresponding data from groups whose cogrowth series are known exactly. Further,
we add to the set of such groups by finding the cogrowth series for
Baumslag-Solitar groups $\mathrm{BS}(N,N) = \langle a,b | a^N b = b a^N \rangle$ and
prove that their cogrowth rates are algebraic numbers. 


\end{abstract}

\maketitle

\section{Introduction}
In this article we consider the function that counts the number of trivial words
in a finitely presented group, the so-called {\em cogrowth function}. The
exponential growth rate of this function is simply called the \emph{cogrowth}
and is intimately related to the {\em amenability} of the group  \cite{\Cohen,\Grig}. 
Amenability is an active area of current research, and cogrowth is just
one of many characterisations.  

In this article we propose a new numerical technique to estimate the cogrowth
of finitely presented groups, based on ideas from statistical mechanics, which
we show to be quite accurate in predicting the cogrowth  for a range of
groups for which the cogrowth series and/or  amenability is known: these include {\em
Baumslag-Solitar} groups, a finitely presented relative of the {\em basilica group}, and some free products studied by Kouksov \cite{\Kouksov}. 

The present article builds on previous work of a subset of the authors
\cite{\ERW}, where various techniques, also based in statistical mechanics, were
applied to the problem of estimating and computing the cogrowth  for
Thompson's group $F$. This in turn built on previous work of Burillo, Cleary and
Wiest \cite{\BurClearyWiest}, and  Arzhantseva, Guba,  Lustig, and Pr\'eaux \cite{\AGLP}, 
who applied experimental techniques to the problem
of deciding the amenability of $F$. 
In other work, Belk and Brown \cite{\BB} proved the currently best known upper bound for the isoperimetric constant for $F$, and 
Tatch Moore \cite{\TatchMoore}
 gives lower bounds on the growth rate of F\o lner function for $F$. 
 
 More generally a (by no means exhaustive) list of others working in the area of random walks on groups is
Bartholdi \cite{\BartKaimNekrash, \BartVirag, \BartWoess},  
              Diaconis and Saloff-Coste  \cite{\DiacSCd,\DiacSCb,\DiacSCa,\DiacSCe,\DiacSCf}, Dykema \cite{\DykemaBS,\DykemaRW},
 Lalley \cite{\Lalley},
Smirnova-Nagnibeda  \cite{\NagA, \NagB} and Woess \cite{\OrtnerWoess,\Woess}.


For the benefit of readers outside of group theory, we start with a precise
definition of group presentations and cogrowth.

\begin{definition}[Presentations and trivial words]
A presentation $$\langle a_1,\dots,a_k | R_1, \dots, R_m\rangle$$ encodes a group
as follows.
\begin{itemize}
\item The letters $a_i$ are elements of the group and are called {\em
generators}, and the $R_i$ are finite length words over the letters $a_1,\ldots, a_k,
a_1^{-1},\ldots, a_k^{-1}$ and are called {\em relations} or {\em relators}. 

\item  A group is called {\em finitely generated} if it can be encoded by a presentation with the list $a_1,\ldots, a_k$ finite, and {\em finitely presented} if it can be encoded by a  presentation with both  lists $a_1,\ldots, a_k$ and $R_1, \dots,
R_m$ finite.

 \item A word in the letters  $a_1,\ldots, a_k, a_1^{-1},\ldots, a_k^{-1}$  is
called {\em freely reduced} if it contains no subword $a_j^{\pm 1}a_j^{\mp 1}$. 

\item The set of all freely reduced words, together with the operation of
concatenation followed by free reduction (deleting $a_j^{\pm 1}a_j^{\mp 1}$
pairs) forms a group, called the {\em free group} on the letters
$\{a_1,\dots,a_k\}$, which we denote by  \\ $F(a_1,\ldots, a_k).$

\item Let $N(R_1, \dots, R_m)$ be the normal subgroup which contains all
words of the form $\ds \prod_{j=1}^m \rho_jR_j\rho_j^{-1}$ where $\rho_i$ is
any word in the free group, and $R_j$ is one of the relators or their inverses.
This subgroup is called the {\em normal closure} of the set of relators, and is
the smallest normal subgroup in $F(a_1,\ldots, a_k)$ that contains all words
$R_1, \dots, R_m$.

\item The group encoded by the presentation $\langle a_1,\dots,a_k | R_1, \dots,
R_m\rangle$  is defined to be the quotient group $F(a_1,\ldots, a_k)/N(R_1,
\dots, R_m)$. 

\item It follows that words in $F(a_1,\ldots, a_k)$ equals the identity element
in $G$  if and only if it lies in the normal subgroup $N(R_1, \dots, R_m)$, 
and so is equal to a product of conjugates of relators and their inverses.
\end{itemize}
\end{definition}
We will make extensive use of this last point in the work below. We call a word
in $F(a_1,\ldots, a_k)$ that equals the identity element in $G$ a {\em trivial
word}.

The function $c:\mathbb N\rightarrow \mathbb N$ where $c(n)$ is the number of
freely reduced words in the generators of a finitely generated group that
represent the identity element is called the {\em cogrowth function} and the
corresponding generating function is called the \emph{cogrowth series}. The
rate of exponential growth of the cogrowth function is the {\em cogrowth}
of the group (with respect to a chosen finite  generating set). Grigorchuk and
independently Cohen \cite{\Cohen, \Grig} proved that a finitely generated group
is  amenable if and only if its cogrowth is twice the number of generators
minus~1.

For more background on amenability and  cogrowth see \cite{\Mann, \Wagon}.
The free group on two (or more) letters, as defined above, is
known to be non-amenable. Also, subgroups of amenable groups are also amenable.
It follows that if a group contains a subgroup isomorphic to the free
group on 2 generators ($F(a_1,a_2)$ above), then it cannot be amenable.

The article is organised as follows. In Section~\ref{sec:numerical} we adapt an
algorithm designed to sample self-avoiding polygons  to the problem of
estimating the growth rate of trivial words in finitely presented 
groups. The algorithm we describe also works when the group is finitely
generated but has infinitely many relations -- in this case its application is
more subtle (in the way one samples relators from an infinite list). To 
validate the accuracy of our algorithm we test it on groups whose cogrowth
series are known exactly. In Section~\ref{sec:BSG} we add to this pool of
results by finding the cogrowth series of the Baumslag-Solitar groups $\mathrm{BS}(N,N) =
\langle a,b | a^N b = b a^N \rangle$.
We apply our algorithm and analyse the resulting data in
Section~\ref{sec:analysis} and summarise our results in
Section~\ref{sec:conclusion}.


\section{Metropolis Sampling of Freely Reduced Trivial Words in Groups}
\label{sec:numerical}

Our algorithm will be designed to sample words in a group $G$ along a Markov
chain using the Metropolis algorithm \cite{\Metropolis}. States will be sampled
by the algorithm by generating new states from a current state via
\textit{elementary moves}. These elementary moves will be defined in more detail
below -- they are local changes made in a systematic manner to a freely reduced
trivial word $w$ to obtain a new freely reduced trivial word $v$.

The approach is as follows: Let $w_n$ be the \textit{current state} of the
algorithm (so that $w_n$ is a freely reduced trivial word of $G$). Choose an
elementary move from a set of available elementary moves and create a trial word
$w_n^\prime$ by implementing the elementary move on $w_n$ (where $w_n^\prime$ is
also a freely reduced trivial word).  Accept $w_n^\prime$ as the next state in
the Markov chain with probability $P(\omega_n \to \omega_n^\prime)$, in which
case the next state is $w_{n+1} = w_n^\prime$.  If $w_n^\prime$ is rejected, then the next
state is by default $w_{n+1} = w_n$. This rejection technique is characteristic
of the Metropolis algorithm and it ensures that the sampling is aperiodic.

This implementation samples words $\{w_n\}$ for $n=0,1,2,\ldots$ along a Markov
chain which is initiated at a state $w_0$.  The initial state $w_0$ may be
chosen arbitrarily, but must be a freely reduced trivial word of $G$. It is
convenient to choose $w_0$ from the set of relators of $G$.

 The dynamical implementation of 
 our algorithm is inspired by the BFACF
algorithm \cite{\CarvCaracc, \CarvEtAl, \BergF} in statistical physics, which was developed to sample
lattice self-avoiding walks and polygons from stretched Boltzmann distributions.
The self-avoiding walk is a model of polymer entropy, a celebrated unsolved
problem in polymer physics and chemistry \cite{\DeGennes, \Flory}.  Details
about the implementation of the BFACF algorithm can also be found in 
\cite{\BuksMC, \MadrasSlade}.

\subsection{Elementary moves for sampling trivial words in a group $G$}

Let $G = \langle a_1,a_2,\dots a_k | R_1, R_2, \dots ,R_m ,\dots\rangle$ be a
group on $k$ generators with finite length relators $R_i$.  The number of
relators may be finite, or infinite. Let $w$ be a freely reduced trivial word in
$\{a_1,a_1^{-1},\dots,a_k, a_k^{-1} \}$. Denote the length of $w$ by $|w|$. And
finally, let $S$ be the set of relators $R_i$, their inverses $R_i^{-1}$ and all
cyclic permutations of relators and their inverses. Note that  $S$ is an
infinite set if and only if $G$ has 
an infinite set  of  relators.

Suppose that we have sampled along a Markov chain $\{ w_n \}$ and that
the current state is a freely reduced trivial word $w=w_n$ of length
$|w|=|w_n|$.  A new trivial word $w^\prime$ is constructed from $w$ by choosing
from the following two elementary moves:
\begin{itemize}
  \item \textit{Conjugation} --- Let $x$ to be one of the $2k$ possible 
generators (and their inverses) chosen uniformly and at random.  Put $w^\prime =
x w x^{-1}$ and perform free reductions on $w^\prime$ to
produce $w^{\prime\prime}$. 

\item \textit{Insertion} --- Let $R\in S$ be one of the relators or their
inverses or any cyclic permutations of those relators or their
inverses%
\footnote{For example, in $\mathrm{BS}(2,3)$ defined in Section~\ref{sec:BSG}, the
relator $a^2ba^{-3}\bbar$
yields $2 \times 7 = 14$ possible choices.}. %
 Choose an integer $m\in\{0,1,\ldots,|w|\}$ with uniform probability and
partition $w$ into two subwords $u$ and $v$, with $|u| = m$. Form $w^\prime = u
R v$, and freely reduce this word to get $w^{\prime\prime}$.  If $m=0$,
then $R$ is prepended to $w$, and if $m=|w|$, then $R$ is appended to $w$.
\end{itemize}
The elementary moves are implemented by choosing a conjugation with
probability $p_c$, and otherwise an insertion.

The two elementary moves produce freely reduced trivial words
$w^{\prime\prime}$ by acting on $w$.  A Metropolis style Monte Carlo algorithm
can be implemented using these moves provided that they are uniquely reversible.

One may verify that conjugations are uniquely reversible.  Unfortunately,
insertions are not, and this must be accounted for in the implementation of
the algorithm by conditioning the use of insertion moves such that they
become uniquely reversible.

We show by example that insertions are not reversible:  Let $R\in S$
and consider the insertion of $R^{-1}$ to the right of $R$ 
in the word $a^{\ell} R a^{-\ell}$.
This will reduce the word to the empty word, but there is no elementary
move which will produce $a^{\ell} R a^{-\ell}$ from the empty word by
inserting any relator on the empty word (here we assume $a^{\ell} R a^{-\ell}$ are not relators).
 This difficulty can be overcome
by rejecting proposed moves as a result of inserting $R$ if it changes
the length of the word by more than $|R|$.

A second difficulty may arise with insertions, and we show again by example
that an insertion may not be uniquely reversible, even if it it changes the
length of a word by at most $|R|$.  Consider the group $\mathbb{Z}^2=\langle a,b \ | \ ba\bbar\abar\rangle$
and insert the relator $R=ba\bbar\abar$ into the word 
$uba^{-1}b^{-1}aba^{-1}v$ at the position marked by $*$ below:
\begin{align}
u b \abar\bbar*ab \abar v & \mapsto u b \abar\bbar \cdot
ba\bbar\abar \cdot ab \abar v \mapsto ub\abar v
\end{align}
This move can be reversed in 2 ways. Insert $b \abar \bbar a$ (which
is a cyclic permutation of the inverse of the relation) at the~$*$
\begin{align}
   u*b\abar v \mapsto u \cdot b \abar \bbar a \cdot b\abar v
\end{align}
or  insert $\bbar ab
\abar$ (another cyclic permutation of the inverse of the relation) at the~$*$
\begin{align}
   ub\abar*v \mapsto ub\abar \cdot \bbar ab \abar \cdot v
\end{align}
This will disturb the detailed balance condition required for Metropolis
style algorithms with the result that the algorithm will sample from an
incorrect stationary distribution.

We show how to account for the above by modifying the insertion move as 
follows: Reject all attempted insertions of $R \in S$ in a word if either there
are cancellations to the right, or if it changes the length of the word by more
than $|R|$.  Attempted insertions which neither cancel to the right, nor
change the length of the word by more than $|R|$ will be called
\textit{valid}, and we call an insertion a  \textit{left-insertion} if
cancellations of generators only occurs to the left and if the insertion is
valid.
\begin{itemize}
\item \textit{Left-Insertion} --- Let $R \in S$ be one of the relators
or their inverses or any cyclic permutation of those relators and their
inverses. Choose an integer $m\in\{0,1,2,\ldots,|w|\}$ uniformly and 
partition $w$ into two subwords $u$ and $v$, with $|u| = m$. If $m=0$ then
prepend $w^{\prime} = R w$ and note that this is valid only if there are no
cancellations of generators.  If this is valid, then put $w^{\prime\prime}
= w^\prime$, otherwise put $w^{\prime\prime}=w$. If $m=|w|$, then append
$w^\prime = wR$ and this is valid even if there are cancellations
to the left.  Freely reduce $w^\prime$ to obtain $w^{\prime\prime}$.
Otherwise, form $w^\prime = u R v$. If $R$ cancels to the right with
$v$ then reject the proposed move and keep $w$. Otherwise, freely reduce
$w^\prime$ to obtain $w^{\prime\prime}$. If $|w^{\prime\prime}|< |w|-|R|$ then
reject the move (and keep $w$).
\end{itemize}
Left-insertions are uniquely reversible, and are suitable as an 
elementary move in a Metropolis style Monte Carlo algorithm for sampling freely
reduced trivial words in $G$.

\begin{lemma}
   Left-insertions are uniquely reversible.
\end{lemma}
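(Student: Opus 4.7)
I would construct an explicit left-insertion $T^{-1}$ that reverses each given left-insertion $T$, and then argue that the construction is forced by the data.

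Setup: given $T=(R,m)$ sending $w$ to $w''$, write $w=uv$ with $|u|=m$. Because left-insertions only cancel on the left, the free reduction of $uRv$ takes the form $w''=u_1R_2v$, where we factor $u=u_1u_2$ and $R=R_1R_2$ so that $u_2R_1$ is exactly the cancelled segment; in particular $R_1^{-1}=u_2$. The natural candidate reverse is $T^{-1}=(R^{-1},\,|u_1|+|R_2|)$, which produces $u_1R_2\cdot R_2^{-1}R_1^{-1}\cdot v$; here the middle $R_2R_2^{-1}$ left-cancels and the resulting word is $u_1u_2v=w$. I would then check the left-insertion conditions for $T^{-1}$: the length change $|R_1|-|R_2|$ is bounded by $|R|$ in absolute value, and no right cancellation occurs, because whenever $u_2$ is nonempty the last letter of $R^{-1}$ coincides with the last letter of $u$, which cannot cancel with the first letter of $v$ without contradicting the reducedness of $w$.

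The hard part is the boundary case in which $u_2$ is empty and the first letter of $R$ happens to coincide with the first letter of $v$: here the formula reverse has an immediate right cancellation and must be rejected, so a different reverse is required. In this situation I would take $T^{-1}$ to be an append (at position $|w''|$) of the cyclic permutation of $R^{-1}$ whose letters successively annihilate the suffix of $w''$; such a cyclic permutation lies in $S$ precisely because $S$ is closed under cyclic permutation and inversion, and appending is always a valid left-insertion. Finally, uniqueness of $T^{-1}$ follows by locating, in $w''$, where the inserted relator of $T$ must reside and observing that the forbidden right cancellation together with the length bound $|w|\geq|w''|-|R|$ force both the position $\tilde m$ and the specific element of $S$, modulo the middle-versus-append dichotomy handled above. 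The main obstacle I anticipate is precisely verifying in that exceptional case that the appending cyclic permutation exists in $S$ and that no competing left-insertion into an interior position of $w''$ produces $w$.
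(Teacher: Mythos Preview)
Your main construction matches the paper's: write $w''=u_1R_2v$ and reverse by inserting $R^{-1}$ immediately after $R_2$. You also go further than the paper by isolating the exceptional case $u_2=\varepsilon$ with the first letter of $R$ equal to the first letter of $v$; the paper's proof in fact overlooks this case and simply asserts that inserting $R^{-1}$ to the right of $R$ always works, not noticing that this very insertion would be rejected for right cancellation.

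Your proposed repair, however, does not work: appending a cyclic permutation of $R^{-1}$ at position $|w''|$ recovers $w$ only when $v$ happens to be a prefix of $R$. If an append fully left-cancels it simply deletes the last $|R|$ letters of $w''=uRv$, and those letters form a suffix of $Rv$ whose inverse need not lie in $S$. In $\mathbb{Z}^2=\langle a,b\mid aba^{-1}b^{-1}\rangle$, take the trivial word $w=b^{-1}aaba^{-1}a^{-1}$ with $u=b^{-1}$, $v=aaba^{-1}a^{-1}$, and insert $R=aba^{-1}b^{-1}$ at $m=1$: then $w''=b^{-1}aba^{-1}b^{-1}aaba^{-1}a^{-1}$, the natural reverse at position $5$ is rejected ($a^{-1}$ meets $a$ on the right), and the last four letters $a\,b\,a^{-1}a^{-1}$ of $w''$ have inverse $aab^{-1}a^{-1}$, which is not a cyclic permutation of $R^{\pm1}$, so no append returns $w$. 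The genuine unique reverse here inserts the cyclic permutation $a^{-1}bab^{-1}$ of $R^{-1}$ at position $6$: in general one must shift the insertion point past the longest common prefix of $R$ and $v$ and use the correspondingly rotated inverse. Your uniqueness sketch, built on an interior-versus-append dichotomy, therefore needs to be reworked as well.
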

\begin{proof}
Let $w = uv$ be a freely reduced trivial word in the group $G$ and let 
$R\in S$. Form $w^\prime = u R v$ via a left-insertion, where $u$ or $v$ may be 
the empty word.
\begin{itemize}
  \item Suppose there are no possible cancellations to the left or right ---
then $w^{\prime\prime} = u R v$, and the move can be uniquely reversed by 
inserting $R^{-1}$ (which must also be a relator in the group) to the
\emph{right} of $R$.  This gives $u R R^{-1} v \mapsto uv$. Further
cancellations cannot occur because $w=uv$ was freely reduced. Note that this is
unique because any other insertion must cancel $R$, and to do so would require 
cancellations to the right and so would not be a left-insertion.

\item Suppose there are some cancellations to the left when $R$ is inserted
in $w$.  In particular, in this case one has $w = u^\prime s v$ and $R = 
\bar{s} t$ for some freely reduced words $u^\prime$, $s$ and $t$ (where $t$ may
be the empty word).  Inserting $R$ to the right of $s$ and freely reducing the
word gives $w^{\prime\prime} = u^\prime t v$ (and $t$ may be empty). This move
is uniquely reversible by inserting $R^{-1} =\bar{t} s$ to the \emph{right} 
of $t$. This gives $u^\prime t \bar{t} s v \mapsto u\prime s v= w$. No further
cancellations are possible because the original word was freely reduced.
Again, by a similar argument, this move is unique --- all other possibilities
require cancellation to the right.
\end{itemize}
\end{proof}
The conjugation and left-insertion elementary moves can be implemented
in a Metropolis algorithm to sample freely reduced trivial words in $G$.

\subsection{Metropolis style implementation of the elementary moves}

Conjugations and left-insertions may be used to sample along a Markov
chain in the state space of freely reduced trivial words of a group $G$
on $k$ generators. 

The algorithm is implemented as follows:  Let $p_c \in [0,1]$, $\alpha \in
\mathbb{R}$ and $\beta \in \mathbb{R}^+$ be parameters of the algorithm and
assume that $\beta$ is small.  As above, let $S$ be the set of all
cyclic permutations of the relators and their inverses and recall that $S$
may be finite or infinite.

Define $P$, a probability distribution over $S$, so that $P(R)$ is the
probability of choosing $R\in S$ with $\sum_{R\in S} P(R) = 1$. Further, assume
that $P(R)>0$ for all $R\in S$ and also that $P(R) = P(R^{-1})$ (we shall
eventually require these two conditions). In the case that $S$ is finite we
choose $P$ to be the uniform distribution, although we are free to choose other
distributions.

Suppose that $w_n$ is the current state, a freely reduced trivial word
produced by the algorithm, and inductively construct the next state 
$w_{n+1}$ as follows:
\begin{itemize}
\item[$\circ$] With probability $p_c$ choose a conjugation move and otherwise
choose a left-insertion.
\item[$\circ$] If the move is a conjugation, then choose one of the $2k$
possible conjugations randomly with uniform probability:  Say that the
pair $(c,c^{-1})$ is chosen where $c$ is a generator or its inverse.
Put $u = c w_n c^{-1}$ and freely reduce $u$ to obtain $w^\prime$.
Construct $w_{n+1}$ from $w^\prime$ and $w_n$ as follows:
\begin{align}
 w_{n+1} &= \begin{cases}
  w^\prime, & \mbox{ with probability 
    $p=\min\left\{1, \frac{(|w'|+1)^{1+\alpha}}{(|w|+1)^{1+\alpha}}
	  \beta^{ |w'|-|w| } \right\}$}; \\
  w_n & \mbox{ otherwise}.
             \end{cases}
\label{eqn2} 
\end{align}
\item[$\circ$] If the move is a left-insertion, then choose an element
$R\in S$ with probability $P(R)$.  Choose a location
$m\in \{0,1,2,\ldots,|w_n|\}$ in the word $w_n$ uniformly. This is the
location where the left-insertion will be attempted.  Attempt a left
insert of $R$ at the location $m$.  Construct $w_{n+1}$ as follows:
\begin{align}
 w_{n+1} &= \begin{cases}
    w_n, & \mbox{if the left-insertion of $R$ is not valid}; \\
    w^\prime, & \mbox{ if $R$ is valid and with probability
    $p=\min\left\{ 1, \frac{(|w^\prime|+1)^{\alpha}}{(|w|+1)^{\alpha}}
    \beta^{|w^\prime|-|w|} \right\}$}; \\
    w_n, & \mbox{ otherwise} .
             \end{cases}
\label{eqn3} 
\end{align}
\end{itemize}

Let $w$ and $v$ be two words and suppose that $v$ was obtained from $w$
by a conjugation as implemented above.  Then the transition probability
$P_r (w\to v)$ is given by
\begin{align}
P_r (w\to v) &= \frac{1}{2k} \left( 
\frac{(|v|+1)^{1+\alpha}}{(|w|+1)^{1+\alpha}}
                     \beta^{ |v|-|w| } \right)
\end{align}
since a conjugation is chosen uniformly from $2k$ possibilities, and provided
that $p< 1$ in equation~\Ref{eqn2}. Otherwise, the transition probability of
the reverse transition is $P_r(v\to w) = \nicefrac{1}{2k}$.  This, in
particular, shows the condition of detailed balance for conjugation moves:
\begin{align}
P_r (w\to v) &= \left( \frac{(|v|+1)^{1+\alpha}}{(|w|+1)^{1+\alpha}}
                     \beta^{ |v|-|w| } \right) P_r (v\to w)
\end{align}
which simplifies to the symmetric presentation
\begin{align}
(|w|+1)^{1+\alpha} \beta^{|w|} P_r (w\to v) &=
(|v|+1)^{1+\alpha} \beta^{ |v|}  P_r (v\to w) .
\label{eqn6} 
\end{align}

In the alternative case that $w$ and $v$ are two words and $v$ was obtained
from $w$ by a left-insertion of $R\in S$ as implemented above, the
transition probability is given by
\begin{align}
P_r (w\to v) = \frac{P(R)}{|w|+1}
                    \left( \frac{(|v|+1)^{\alpha}}{
                     (|w|+1)^{\alpha}} \beta^{ |v|-|w| } \right)
\end{align}
where the element $R\in S$ is selected with probability $P(R)$,
the location for the left-insertion of $R$ is chosen with probability
$1/(|w|+1)$, and we have assumed (without loss of generality) that $p<1$ in
equation~\Ref{eqn3}.

Similarly, the transition probability of $v\to w$ via a left-insertion of
$R^{-1}\in S$ is
\begin{align}
P_r (v\to w) &= \frac{P(R^{-1})}{|v|+1} .
\end{align}
This gives a second condition on the probability distribution $P$ over $S$,
namely that $P(R) = P(R^{-1})$ for all elements $R\in S$.  In this event a
comparison of the last two equations, and simplification, gives
\begin{align}
(|w|+1)^{1+\alpha} \beta^{|w|} P_r (w\to v) &=
(|v|+1)^{1+\alpha} \beta^{ |v|}  P_r (v\to w)
\label{eqn9} 
\end{align}
as a condition of detailed balance for left-insertions.  This is the 
identical condition obtained for conjugation in equation~\Ref{eqn6}.  The
above is a proof of the following lemma.

\begin{lemma}
Let $\{ w_n \}$ be a Markov chain in the state space of freely
reduced words in $G$, and suppose the transition of state $w_n$ to
$w_{n+1}$ is due to a transition by a conjugation move with probability
$p_c$, and due to a left-insertion with probability $q_c=1-p_c$.  Then the
Markov chain samples from the stationary distribution
\begin{align*}
P_r (w) &= \frac{(|w|+1)^{1+\alpha} \beta^{|w|} }{\mathcal{N}}
\end{align*}
over its state space, where $\mathcal{N}$ is a normalising factor.
\label{lemma2} 
\end{lemma}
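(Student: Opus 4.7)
The plan is to verify that the combined Markov chain satisfies detailed balance with respect to the claimed distribution $\pi(w) = (|w|+1)^{1+\alpha}\beta^{|w|}/\mathcal{N}$, and then invoke the standard ergodic theorem. Much of the arithmetic is already done: the derivation preceding the lemma establishes, under the working hypothesis that the Metropolis acceptance ratio satisfies $p<1$, the detailed balance identity \Ref{eqn6} for conjugations and the identical identity \Ref{eqn9} for left-insertions.

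First I would dispose of the case $p=1$ that was set aside in the derivation. When $p=1$ for the forward move $w\to v$, the definition $p=\min\{1,\cdot\}$ forces the raw Metropolis ratio $\rho(w,v)\ge 1$ and hence $\rho(v,w)\le 1$, so the reverse acceptance probability picks up exactly the ratio needed; inserting this into the elementary transition probabilities gives \Ref{eqn6} and \Ref{eqn9} unconditionally. Second, because the full one-step kernel is the convex combination
\begin{align*}
P_r(w\to v) \;=\; p_c\, P_r^{\mathrm{conj}}(w\to v) \;+\; (1-p_c)\, P_r^{\mathrm{ins}}(w\to v) \qquad (v\neq w),
\end{align*}
and each summand satisfies the same detailed balance equation, so does $P_r$. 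The holding probability at $w$ plays no role in the detailed balance check.

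Next I would observe aperiodicity and irreducibility, which together with detailed balance imply convergence to the unique stationary distribution $\pi$ on the state space of freely reduced trivial words. Aperiodicity is immediate: for any $w$ there are relators whose insertion is rejected (for instance, attempting a left-insertion that cancels to the right), so $P_r(w\to w)>0$. Irreducibility is where the real work sits. The assumption $P(R)>0$ for every $R\in S$ and the fact that every trivial word is a product of conjugates of elements of $S$ means that, starting from any relator, one can realise an arbitrary target trivial word $v=\prod_i \rho_i R_{j_i}\rho_i^{-1}$ by a sequence of conjugations (to install each $\rho_i$) interleaved with left-insertions of the appropriate cyclic permutations of $R_{j_i}$; by the reversibility established in Lemma above, this also shows that any $w$ can be routed to the starting relator, so any two trivial words communicate.

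The main obstacle I expect is the irreducibility step: free reduction can collapse a carefully constructed word in unintended ways, and one must check that the required insertions can always be done as \emph{left}-insertions (i.e., without right-cancellations and without shrinking the word by more than $|R|$). I would handle this by building the target word from left to right, inserting each conjugated relator at the right-hand end of the current word (where the \textit{left-insertion} rule explicitly permits cancellation), so that every intermediate insertion is valid by construction. Once irreducibility is in hand, the detailed balance identity uniquely identifies $\pi$ as the stationary distribution, completing the proof.
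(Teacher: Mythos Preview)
Your proposal is correct and follows the same core approach as the paper: both rest on the detailed balance identities \Ref{eqn6} and \Ref{eqn9} derived immediately before the lemma. The paper's own proof is a one-liner --- it simply cites the Perron--Frobenius theorem and says the result follows by summing the detailed balance conditions over $v$ --- whereas you spell out the $p=1$ case and the convex-combination argument explicitly, which is a welcome clarification.

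The main structural difference is that you fold irreducibility and aperiodicity into the proof of this lemma, while the paper defers them to the next lemma (Lemma~\ref{lemma3}) and its corollary. The paper treats Lemma~\ref{lemma2} as a statement only about what the stationary distribution \emph{is}, leaving ergodicity for the subsequent results; your version packages the full convergence argument in one place. Your sketch of irreducibility (build the target word left-to-right, always appending relators at the right end so that left-insertions are valid) is exactly the argument the paper gives in its proof of Lemma~\ref{lemma3}, so there is no genuine divergence in method, only in how the material is apportioned between statements.
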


\begin{proof}
This lemma is a corollary of the Perron-Frobenius theorem (see
\cite{\Bellman} for example), and follows by summing the conditions of detailed
balance in equations~\Ref{eqn6} and~\Ref{eqn9} over $v$.
\end{proof}

\subsection{Irreducibility of the elementary moves}

In this subsection we examine the state space of the Markov chain in
Lemma~\ref{lemma2} by determining the irreducibility class of trivial freely
reduced words in $G$ with respect to the elementary moves of the algorithm.

The elementary moves above may be represented as a multigraph $M$ on the freely
reduced words of $G$:  Two freely reduced words $w,v$ form an arc $wv$ for each
elementary move (a conjugation or a left-insertion) which takes $w$ to $v$. 
Since each elementary move is uniquely reversible, $M$ may be considered
undirected. The irreducibility class of a freely reduced trivial word $w$ in $G$
is the collection of freely reduced trivial words in the largest connected
component $M_w$ of $M$ which contains $w$. The algorithm will be said to be
irreducible on freely reduced trivial words in $G$ if the words in $M_w$ form
exactly the family of freely reduced trivial
words in $G$.

\begin{lemma}
  Consider the group $G = \langle a_1,\dots,a_k | R_1 \dots R_m \ldots 
\rangle$ with $k$ generators. If $0<p_c<1$ and $P(R)>0$ for all $R \in S$, then 
the elementary moves defined above are irreducible on the set of all freely
reduced trivial words in that group.
\label{lemma3} 
\end{lemma}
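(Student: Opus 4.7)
The plan is to prove the stronger statement that, from the empty word $\epsilon$, every freely reduced trivial word $w$ in $G$ is reachable via a sequence of elementary moves. Since every elementary move is uniquely reversible (by the previous lemma), this puts all trivial words in the same connected component of $M$; the hypotheses $0 < p_c < 1$ and $P(R) > 0$ for all $R \in S$ guarantee that every such move has positive probability, so the chain is irreducible on the state space of freely reduced trivial words.

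A key preliminary observation is that conjugation implements cyclic rotation: if $w = xu$ is freely reduced with $x$ a generator or its inverse, then $x^{-1} w x = ux$ after free reduction, because the freely reduced $u$ cannot begin with $x^{-1}$. Iterating, any cyclic rotation of $w$ is reachable from $w$ by conjugations alone, with all intermediate words freely reduced.

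The main claim is established by strong induction on the \emph{area} of $w$, defined as the minimum $n$ such that $w = \prod_{i=1}^n \rho_i R_{j_i}^{\epsilon_i} \rho_i^{-1}$ in $F(a_1, \ldots, a_k)$. The base case $n=0$ is immediate. For the inductive step, the identity
\[
w = \rho_1 \bigl( R_{j_1}^{\epsilon_1} \cdot \rho_1^{-1}\,\mathrm{tail}\,\rho_1 \bigr) \rho_1^{-1}, \qquad \mathrm{tail} = \prod_{i=2}^n \rho_i R_{j_i}^{\epsilon_i} \rho_i^{-1},
\]
together with the rewrite $\rho_1^{-1}\,\mathrm{tail}\,\rho_1 = \prod_{i=2}^n (\rho_1^{-1}\rho_i) R_{j_i}^{\epsilon_i} (\rho_1^{-1}\rho_i)^{-1}$, shows that the freely reduced form $v$ of $\rho_1^{-1}\,\mathrm{tail}\,\rho_1$ has area at most $n-1$. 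By the inductive hypothesis, $v$ is reachable from $\epsilon$. From $v$ I then (i) append a cyclic permutation $R$ of $R_{j_1}^{\epsilon_1}$ at position $|v|$ — a left-insertion which is always valid by the definition of appending — producing some $v' = vR$ after reductions; (ii) conjugate by the letters of $\rho_1$, giving $\rho_1 v' \rho_1^{-1}$, which equals $\mathrm{tail}\cdot \rho_1 R \rho_1^{-1}$ in the free group; and (iii) cyclically rotate using conjugations to move $\rho_1 R \rho_1^{-1}$ to the front. After free reduction the result is $w$.

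The main obstacle is carefully tracking free reductions at every stage: insertions and conjugations can cause substantial cancellation, so the intermediate freely reduced words differ from the formal products, and one must verify that after all reductions the intended word is obtained. Additional care is required in degenerate cases — for instance, when $R_{j_1}^{\epsilon_1}$ consists of a single repeated letter $y^{-1}$ as in the cyclic relator $a^N$ — where the choice of cyclic permutation of the relator, together with possible preliminary cyclic rotations of $v$, must be coordinated so that each intermediate move remains valid. Once these details are checked, the induction closes and irreducibility follows.
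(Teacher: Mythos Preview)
Your proposal is correct and follows essentially the same approach as the paper: both arguments use the product-of-conjugates representation $w=\prod_j \rho_j r_j \rho_j^{-1}$ and build $w$ by alternating conjugations with appending relators at the right end (which is always a valid left-insertion). The paper's version is slightly more direct---it starts from $r_1$ rather than $\epsilon$, processes the factors iteratively (conjugate by $\rho_{j+1}^{-1}\rho_j$, then append $r_{j+1}$, finally conjugate by $\rho_s$) rather than by induction on area, and thereby avoids the extra bookkeeping you introduce with cyclic permutations of relators and cyclic rotations.
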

\begin{proof}
Consider a relator of $G$, say $R_1\in S$.  Observe that left-insertions
can be used to change $R_1$ into any other relator $R_m$ of $G$.  Hence,
all the relators $R_m$ of $G$ are in the irreducibility class $M$ of $R_1$.
It follows that all cyclic permutations of the $R_m$, and inverses and
their cyclic permutations are also in $M$.  Hence, $S \subseteq M$.

Next, let $C = \langle w_n \rangle$ be a realisation of a Markov chain
with initial state $w_0 = R_1$.  All words $w_n$ sampled by C are obtained
by conjugation or by left-insertions by elements of $S$, and so they are
all trivial and freely reduced.  Thus $C\subseteq M$ if $C$ is initiated
by $R_1$.

It remains to show that any trivial and freely reduced word can occur
in a realisation of a Markov chain $C$ with initial state $R_1 \in S$.

A word $w\in\{a_1^{\pm 1}, \ldots, a_k^{\pm 1}\}^*$ represents the
identity element in the group if and only if it is the product of
conjugates of the relators $R_i^{\pm 1}$.  So $w$ is the word
\begin{align*}
 \displaystyle \prod_{j=1}^s\rho_jr_j\rho_j^{-1}
\end{align*}
after free reduction, where $\rho_j\in\{a_1^{\pm 1}, \ldots, a_k^{\pm 1}\}^*$
and $r_j=R_{i_j}^{\pm 1}$.

We can obtain $w$ using conjugation and left-insertion as follows:
\begin{itemize}
\item set $w=r_1$;
\item conjugate by $\rho_2^{-1}\rho_1$ one letter at a time to obtain
$w=\rho_2^{-1}\rho_1r_1\rho_1^{-1}\rho_2$ after free reduction;
\item insert (append) $r_2$ on the right;
\item repeat the previous two steps (conjugating by
$\rho_{j+1}^{-1}\rho_j$ then inserting $r_j$ on the left) until $r_s$
is inserted;
\item conjugate by $\rho_s$.
\end{itemize}
Since we only ever append $r_j$ to the end of the word, there are no
right cancellations, and at most $|r_j|$ left cancellations.

This completes the proof.
\end{proof}

\begin{cor}
The Monte Carlo algorithm is aperiodic, provided that $P(R)=P(R^{-1})>0$
and $0<p_c<1$.
\end{cor}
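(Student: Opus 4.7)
The plan is to exploit the fact that the chain is irreducible (Lemma~\ref{lemma3}): in an irreducible Markov chain all states share a common period, so producing a single state $w$ with $P_r(w\to w)>0$ forces that common period to be $1$, which is exactly aperiodicity. The entire proof therefore reduces to constructing one self-loop somewhere in the state space.

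I would place the self-loop at the trivial word $w = R_1$. Consider the compound event that the algorithm (i) selects a left-insertion rather than a conjugation, which happens with probability $1-p_c>0$; (ii) picks the relator $R_1^{-1}\in S$, which has probability $P(R_1^{-1})>0$ by hypothesis; and (iii) places the insertion at position $m=0$, which has probability $1/(|R_1|+1)>0$. This proposal attempts to prepend $R_1^{-1}$ to $R_1$, but by construction the last letter of $R_1^{-1}$ is the formal inverse of the first letter of $R_1$, so free reduction would occur after prepending. The validity rule for $m=0$ insertions forbids any cancellation whatsoever, so the proposal is declared invalid and $w_{n+1}=w_n$ via the first case of~\Ref{eqn3}. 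Multiplying the three strictly positive probabilities yields $P_r(R_1\to R_1)>0$, which is the required self-loop.

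I do not anticipate a substantive obstacle. The one point worth flagging is that the argument should sidestep the Metropolis acceptance ratio entirely (which would otherwise entangle the conclusion with assumptions on $\alpha$ and $\beta$), and that is arranged by using the hard rejection coming from the validity rule rather than the soft rejection of the Metropolis step. The two hypotheses in the statement play exactly the roles needed: $0<p_c<1$ makes the left-insertion branch reachable with positive probability, and $P(R)=P(R^{-1})>0$ ensures that the particular relator $R_1^{-1}\in S$ used above is selected with positive probability. Combining this self-loop with the irreducibility of the chain from Lemma~\ref{lemma3} completes the proof.
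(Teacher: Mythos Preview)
Your proof is correct and takes essentially the same approach as the paper: both exploit the rejection mechanism to obtain a self-loop and then combine this with the irreducibility of Lemma~\ref{lemma3}. The paper phrases it as ``$P_r^{N_0}(w\to v)>0 \Rightarrow P_r^{M}(w\to v)>0$ for all $M\geq N_0$'' without exhibiting a specific rejected move, whereas you construct one explicit self-loop at $R_1$ via an invalid prepend and invoke the class property of the period --- your deliberate use of the hard validity rejection (rather than the Metropolis accept/reject step) so as not to entangle the conclusion with $\alpha,\beta$ is a nice detail that the paper leaves implicit.
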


\begin{proof}
Let $P_r (w\to v)$ be the one step transition probability from state
$w$ to state $v$ in the Monte Carlo algorithm.  The probability of
achieving a transition $w\to v$ in $N$ steps is denoted by
$P_r^N (w\to v)$, and by Lemma~\ref{lemma3} there exists an $N_0$
such that $P_r^{N_0} (w\to v) > 0$, if both $w,v$ are freely
reduced trivial words.

The rejection technique used in the definition of both the conjugation
and left-insertion elementary moves immediately implies that if
$P_r^{N_0} (w\to v) > 0$ then $P_r^{M} (w\to v) > 0$ for all $M\geq N_0$.
Hence the algorithm is aperiodic.
\end{proof}

A Monte Carlo algorithm which is aperiodic, and irreducible on its state space, 
is said to ergodic. Hence, the algorithm above is ergodic on the state space of
freely reduced trivial words.  Under these conditions, the fundamental theorem 
of Monte Carlo Methods implies the algorithm samples along a Markov chain 
$C = \langle w_n \rangle$ asymptotically from the stationary distribution 
given in Lemma~\ref{lemma2}.

\subsection{Analysis of Variance}
The algorithm was implemented and tested for accuracy%
\footnote{As check on our coding, the algorithm was coded independently by two
of the authors (AR and EJJvR), and the results were compared. Further, we ran
the simulations making lists of observed trivial words for short lengths and
then compared these against exhaustive enumerations.}. %
The stationary distribution of the algorithm (see Lemma~\ref{lemma2}) shows
that the expectation value of the mean length of words sampled for given
parameters $(\alpha,\beta)$ is
\begin{align}
\Ex(|w|) &= \frac{\sum_w |w|\, (|w|+1)^{1+\alpha} \beta^{|w|}}{
\sum_w (|w|+1)^{1+\alpha} \beta^{|w|}}
\label{eqn10} 
\end{align}
where the summations are over all freely reduced trivial words in $G$.

We observe two points:  The first is that increasing $\beta$ will increase
$\Ex(|w|)$.  In fact, there is a critical point $\beta_c$ such that
$\Ex(|w|)<\infty$ if $\beta < \beta_c$, and $\Ex(|w|)$ is divergent if $\beta >
\beta_c$.  Observe that $\beta_c$ is independent of $\alpha$. The second point
is that increasing $\alpha$ will generally increase the value of $\Ex(|w|)$. 
This is convenient when one seeks to estimate the location of $\beta_c$.

Equation~\Ref{eqn10} is a log-derivative of the cogrowth series and will be
finite for $\beta$ below the reciprocal of the cogrowth (being the critical
point of the associated generating function) and divergent above it. Because of
this, we identify $\beta_c$ with the reciprocal of the cogrowth. Hence the
convergence of this statistic gives us a sensitive test of the cogrowth and so
the amenability of the group. For example, if the mean length of words sampled
from a group with 2 generators  at $\beta=\epsilon+\nicefrac{1}{3}$ is
finite, then the group is not amenable.

The realisation of a Markov chain $C = \{w_0,w_1,\ldots,w_n,\ldots\}$ by the
algorithm produces a correlated sequence of an observable (for example the
length of words).  We denote the sequence of observables by
 $\{O(w_1),O(w_2),O(w_3),\ldots,O(w_n)\}$ .  
The sample average of the observable over the realised chain is given by
\begin{align}
{\langle\hspace{-1mm}\langle} O(w) {\rangle\hspace{-1mm}\rangle}_n
&= \frac{1}{n} \sum_{i=1}^n O(w_i).
\label{eqn11} 
\end{align}
This average is asymptotically an unbiased estimator distributed normally
about the expected value~$\Ex(O(w))$, given by
\begin{align}
\Ex(O(w)) = \frac{\sum_w O(w) \, (|w|+1)^{1+\alpha} \beta^{|w|}}{
\sum_w (|w|+1)^{1+\alpha} \beta^{|w|}}.
\label{eqn12} 
\end{align}
Hence, ${\langle\hspace{-1mm}\langle} O(w) {\rangle\hspace{-1mm}\rangle}_n $ may
be computed to estimate the expected value $\Ex(O(w))$.

It is harder to determine the variance in the distribution of
${\langle\hspace{-1mm}\langle} O(w) {\rangle\hspace{-1mm}\rangle}_n $
about $\Ex(O(w))$.  Although the Markov chain produces a time series of
identically distributed states, they are not independent, and autocorrelations
must be computed along the time series to determine confidence intervals about
averages.

The dependence of an observable along a time series is statistically measured by
an autocorrelation function.  The autocorrelation function usually decays at an
exponential rate measured by the autocorrelation time $\tau_O$ along the time
series. In particular, the measured connected autocorrelation function of
the algorithm is defined by 
\begin{align}
S_O (k) &= {\langle\hspace{-1mm}\langle} O(w_i)
O(w_{i+k}){\rangle\hspace{-1mm}\rangle}_n
- {\langle\hspace{-1mm}\langle} O(w) {\rangle\hspace{-1mm}\rangle}_n^2 ,
\end{align}
and is dependent on $n$, the length of the chain.  If $n$ becomes very large,
then $S_O(k)$ measures the correlations between states a distance of
$k$ steps apart. The Markov chain is asymptotically homogeneous (independent
of its starting point); this implies that
${\langle\hspace{-1mm}\langle} O(w_i) {\rangle\hspace{-1mm}\rangle}_n
\simeq {\langle\hspace{-1mm}\langle} O(w) {\rangle\hspace{-1mm}\rangle}_n $
if both
$n$ and $i$ are large, and if $i\ll n$ . Thus, for large values of $n$ and $i$,
the autocorrelation time $S_O(k)$ is only dependent on the separation k between
the observables $O(w_i)$ and $O(w_{i+k})$.

Normally, the autocorrelation function of a homogeneous chain is expected to
decay (to leading order) at an exponential rate given by
\begin{equation}
S_O(k) \simeq C_O \, e^{-k/\tau_O}
\end{equation}
where $\tau_O$ is the exponential autocorrelation time of the observable $O$.
The autocorrelation time $\tau_O$ sets a time scale for the decay of
correlations in the time series $\{ O(w_i)\}$:  If $k>>\tau_O$,
then the states $O(w_i)$ and $O(w_{i+k})$ are for all practical statistical
purposes independent. These observations allow us to compute statistical
confidence intervals on the average 
${\langle\hspace{-1mm}\langle} O(w) {\rangle\hspace{-1mm}\rangle}_n$ 
in a systematic way.

Suppose that a time series of length $N$ of observables $\{ O(w_i) \}$ were
realised by the Markov chain Monte Carlo algorithm.   Cut the times series in
blocks of size $M\ll N$, but with $M\gg \tau_O$.  Then 
one may determine $\lfloor N/M \rfloor$ averages estimating
${\langle\hspace{-1mm}\langle} O(w) {\rangle\hspace{-1mm}\rangle}_n$
over the blocked data, given by 
\begin{align}
  [O(w)]_i &= \frac{1}{M} \sum_{j=1}^M O(w_{iM+j})
\end{align}
for $i=0,1,\ldots,\lfloor N/M \rfloor-1$.

The sequence of estimates 
$\{[O(w)]_0,[O(w)]_1,\ldots,[O(w)]_{\lfloor N/M\rfloor-1} \}$ is
itself a time series, and if these are independent estimates, then
for large $M \ll N$ its variance is estimated by determining
\begin{align}
s^2_{M,O} &= \langle [O]^2 \rangle - \langle[O]\rangle^2,
\end{align}
where canonical averages $\langle \cdot\rangle$ are taken.  So if the $[O(w)]_i$
are treated as independent measurements of $\Ex (O(w))$,  then the $67$\%
statistical confidence interval $\sigma_{M,O}$ is given by
\begin{align}
\sigma_{M,O}^2 &= \frac{s_{M,O}^2}{\lfloor N/M \rfloor - 1}.
\end{align}
In practical applications the above is implemented by increasing $M\ll 
N$ until $\sigma_{M,O}$ is insensitive to further increases. In this event one
has $M\gg \tau_O$, and $\sigma_{M,O}$ is the estimated 
$67$\% statistical confidence interval on the average computed in 
equation~\Ref{eqn11}.

In this paper we consider the average length of words -- that is,
$O(w) = |w|$ for each freely reduced and trivial word $w$ sampled by the
algorithm.  We use our algorithm to determine the \textit{canonical expected
length} of freely reduced trivial words \textit{with respect to the Boltzmann
distribution}.  This is defined by putting $\alpha=-1$ in equation~\Ref{eqn10}:
\begin{align}
\Ex_C(|w|) &= \frac{\sum_w |w|\, \beta^{|w|}}{\sum_w  \beta^{|w|}}
\label{eqn11A} 
\end{align}
where the summation is over all freely reduced trivial words in $G$, except the
empty word.

An estimator of $\Ex_C (|w|)$ is obtained by putting
$O(w)=|w|/(|w|+1)^{1+\alpha}$ and $O(w)=1/(|w|+1)^{1+\alpha}$ in
equation~\Ref{eqn12}.  This gives
\begin{align}
\Ex_C (|w|) &= \frac{\Ex\left(\frac{|w|}{(|w|+1)^{1+\alpha}}\right)}{
\Ex\left(\frac{1}{(|w|+1)^{1+\alpha}}\right)} .
\end{align}
In other words, for arbitrary choice of $\alpha$, the ratio estimator
\begin{align}
\langle |w| \rangle_n = \frac{
{\langle\hspace{-1mm}\langle}
|w|/(|w|+1)^{1+\alpha}{\rangle\hspace{-1mm}\rangle}_n
}{{\langle\hspace{-1mm}\langle}1/(|w|+1)^{1+\alpha}{\rangle\hspace{-1mm}\rangle}
_n}
\label{eqn rat est}
\end{align}
may be used to estimate the canonical expected length $\Ex_C (|w|)$ over
the Boltzmann distribution on the state space of freely reduced trivial
words in $G$.  This is particularly convenient, as one may choose
the parameter $\alpha$ to bias the sampling in order to obtain better
numerical results.  For example, it is frequently the case that
(long) trivial words in the tail of the Boltzmann distribution are sampled
with low frequency, and by increasing $\alpha$ the frequency may be
increased.  This gives larger sample sizes on long words, improving the
accuracy of the numerical estimates of the canonical expected
length of words.  For more details, see for example Section~14
in~\cite{\BuksMC}.

\subsection{Implementation}
The algorithm was implemented using a Multiple Markov chain Monte Carlo
algorithm \cite{\Geyer,\TesiMonte} --- an approach that is also known as as
parallel tempering. This greatly reduces autocorrelations in the realised Markov
chains and was achieved as follows:  Define a sequence of values of
$\beta$ such that $0 < \beta_1 < \beta_2 < \ldots < \beta_m < \beta_c$.
Separate chains are initiated at each of the $\beta_i$ and run in
parallel.  States at adjacent values of the $\beta_i$ are compared and
swapped.  This coupling of adjacent chains creates a composite Markov
chain, which is itself ergodic (since each individual chain is) with
stationary distribution the product distribution over all the
separate chains.  This implementation greatly increases the mobility
of the Markov chains, and reduces autocorrelations.  The analysis of
variance follows the outline above.  For more detail on a Multiple
Markov chain implementation of Metropolis-style Monte Carlo algorithms,
see \cite{\BuksMC} for example.

In practice we typically initiated 100 chains clustered towards
larger values of $\beta$ where the mobility of the algorithm is low. Each
chain was run for about $1000$ blocks, each block a total of $2.5\times 10^7$
iterations. The total number of iterations over all the chains were
$2\times 10^9$ iterations, which typically took about 1 week of CPU time
on a fast desktop linux station for each group we considered. We also ran each
group at five different $\alpha$ values $-1,0,1,2,3$. The larger values of
$\alpha$ will ensure that we sample into the tail of the distribution over
trivial words --- in practice the different $\alpha$ values gave consistent
results. Data were collected and analysed to estimate the cogrowth of each
group.

In the next sections, we compare our numerical results with exact
analysis of the Baumslag-Solitar groups.  This will demonstrate the
validity of our numerical approaches above.

\section{Exact cogrowth series for Baumslag-Solitar groups}
\label{sec:BSG}
\subsection{Equations}
Consider the Baumslag-Solitar group 
\[ \mathrm{BS}(N,M) = \langle a,b | a^N b = b a^M \rangle = \langle a,b
| a^N b a^{-M} \bbar \rangle .\] 
Our aim is to compute its cogrowth function, or the corresponding
generating function. Rather than obtain this directly, we instead consider
the set of words (they are not required to be freely reduced) which generate
elements in the horocyclic subgroup $\langle a \rangle$ --- let $\mathcal{H}$
be the set of such words. In what follows we will abuse notation and
when a \emph{word} $w$ generates an element in a subgroup $\langle a^k
\rangle$, we shall write $w \in \langle a^k \rangle$.

Any word in $\{a^{\pm1}, b^{\pm1}\}$ can be transformed into a normal-form for
the corresponding group element by ``pushing'' each $a$ and $\abar$ in the
word as far to the right as possible using the identities
$a^{\pm N} b = b a^{\pm M}$ and $a^{\pm M} \bbar = \bbar a^{\pm N}$, 
and replacing $a^{-i}b$ by $a^{N-i}a^{-N}b=a^{N-i}ba^{-M}$ 
(and similar for $a^{-j}b^{-1}$, where $0<i<N$ and $0<j<M$) so that only positive powers of $a$ appear before a $b^{\pm 1}$ letter.
 The resulting normal form can be written as $P a^k$, where $k$ is the
$a$-exponent, and $P$ is a word in the ``alphabet'' $\{b, ab, \dots a^{N-1}b,
\bbar, a\bbar, \dots a^{M-1}\bbar \}$ that we call the prefix  (see
\cite{\LS}  p. 181).

Consider a normal form $Pa^k$. 
\begin{itemize}
 \item Multiplying this on the right by $a^{\pm 1}$ results in $Pa^{k\pm 1}$. 
 \item If $k = N\ell$ then multiplying on the right by $b$ results in $Pb a^{M
\ell}$ --- if $P$ ends in a $\bbar$ then it will shorten and the $a$-exponent
will be updated accordingly.
 \item If $k = M\ell$ then multiplying on the right by $\bbar$ results in
$P\bbar a^{N \ell}$ --- if $P$ ends in a $b$ then it will shorten and the
$a$-exponent will be updated accordingly.
 \item Otherwise multiplying by $b^{\pm 1}$ will change the $a$-exponent and
lengthen the prefix.
\end{itemize}

Now define $g_{n,k}$ to be the number of words in $\mathcal{H}$ of length $n$
that generate the element with normal form $a^k$. Clearly we have $g_{n,k} =
g_{n,-k}$. Define the generating function
\begin{align}
  G(z;q) &= \sum_{n,k} g_{n,k} z^n q^k.
\end{align}
It is very convenient to define the following subsets of $\mathcal{H}$ and
their corresponding generating functions.
\begin{itemize}
 \item Let $\mathcal{L}$ be the set of words in $\mathcal{H}$ that cannot be
written as $uv$ where $u$ generates an element with normal from $\bbar
a^j$ for any $j$.
 \item Let $\mathcal{K}$ be the set of words in $\mathcal{H}$ that cannot be
written as $uv$ where $u$ generates an element with normal from $ba^j$ for any
$j$.
\end{itemize}
Let the generating functions of these words be $L(z;q)$ and $K(z;q)$
respectively. We note that $L(z;1) = K(z;1)$, since the inverse of any word in
$\mathcal{L}$ gives a word in $\mathcal{K}$ and vice versa. We then need to
define the operator $\Phi_{d,e}$ which acts on the above generating functions to
annihilate all powers of $q$ except those that have $a$-exponent 
equal to $0 \mod d$ and which maps them to powers of $0 \mod e$.
\begin{align}
 \Phi_{d,e} \circ \sum_n z^n \sum_{k} c_{n,k} q^k 
  &= \sum_n z^n \sum_j c_{n,dj} q^{ej}
\end{align}

With these definitions we can write down a set of equations satisfied by the
generating functions $G(z;q), K(z;q)$ and $L(z;q)$.
\begin{prop}
\label{prop GLK eqn}
The generating functions $G,K,L$ satisfy the following system of equations.
\begin{align*}
 L &=  1 + z(q+\bar{q})L + z^2 L\cdot
  \left[  \Phi_{N,M}\circ L + \Phi_{M,N} \circ K \right] 
   - z^2 \left[ \Phi_{M,N} \circ K \right] \cdot
  \left[ \Phi_{N,N} \circ L \right], \\
  K & = 1 + z(q+\bar{q})K + z^2 K \cdot
  \left[ \Phi_{M,N} \circ K + \Phi_{N,M} \circ L \right]
  - z^2 \left[ \Phi_{N,M} \circ L \right] \cdot 
  \left[ \Phi_{M,M} \circ K \right],
\intertext{and}
  G &= 1 + z(q+\bar{q})G 
  + z^2 G \cdot \left[ \Phi_{N,M} \circ L +  \Phi_{M,N} \circ K \right] 
\end{align*}
where we have written $G \equiv G(z;q)$ etc.
\end{prop}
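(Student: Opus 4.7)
The plan is to derive all three equations by a last-return decomposition of walks in the Cayley graph of $\mathrm{BS}(N,M)$. Each word in $\mathcal{H}$ corresponds to a walk starting at the identity and returning to the basepoint vertex $\langle a\rangle$ of the Bass--Serre tree; the sets $\mathcal{L}$ and $\mathcal{K}$ additionally forbid the walk from ever visiting the neighbour $\bbar\langle a\rangle$ or $b\langle a\rangle$ respectively. First I would classify the possible \emph{last excursions} $e$ in a decomposition $w = u \cdot e$, where $u$ is the prefix ending at the penultimate visit of the walk to the basepoint. There are three kinds: a single letter $a^{\pm 1}$; an up-excursion $b\cdot x \cdot \bbar$; or a down-excursion $\bbar\cdot y\cdot b$.

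The defining relation $\bbar a^N b = a^M$ pins down each excursion. For an up-excursion $bx\bbar$, the condition $bx\bbar\in\langle a\rangle$ forces $x$ to represent an element of $\langle a^M\rangle$, so that $ba^{Mj}\bbar=a^{Nj}$; and the requirement of no intermediate return to the basepoint translates exactly into the prefix condition defining $\mathcal{L}$. Hence the up-excursion generating function is obtained by extracting the coefficients of $q^{Mj}$ in $L$ and relabelling them as coefficients of $q^{Nj}$, which is precisely what $\Phi$ does; the symmetric analysis produces the down-excursion generating function from $K$.

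For $G$ the prefix $u$ is unrestricted in $\mathcal{H}$ and any excursion $e$ is admissible, so direct concatenation gives $G = 1 + (\text{excursion generating function})\cdot G$, which rearranges to the stated recurrence. For $L$ (and symmetrically for $K$) the main subtlety is that a down-excursion $\bbar y b$ is allowed only when $u$ does not represent an element of $\langle a^M\rangle$: otherwise the prefix $u\bbar$ reduces to an element of $\bbar\langle a\rangle$ and violates the $\mathcal{L}$-condition. I would therefore first write the $L$-analogue of the $G$-equation (with $L$ and $\mathcal{L}$ in place of $G$ and $\mathcal{H}$) and then subtract off the over-counted pairs in which the down-excursion is attached to a $u$ whose terminal $a$-exponent lies in $M\mathbb{Z}$. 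Picking out those $u$'s is done by the appropriate $\Phi$-operator applied to $L$, producing the correction term in the stated equation.

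The main obstacle is the inclusion-exclusion bookkeeping: one must verify that the forbidden set factors as a product of a $u$-generating function (depending only on the terminal $a$-exponent of $u$) and a down-excursion generating function (independent of the interior of that excursion), and that this factoring is not spoiled by free reductions occurring across the $u$/$e$ seam. Once this is settled, the $\Phi$-indices are determined by matching the $a$-exponents through $\bbar a^N b = a^M$ on each side, and the three equations follow by straightforward manipulation. I would sanity-check the resulting identities on the simplest case $\mathrm{BS}(1,1) = \mathbb{Z}^2$, where every $\Phi$ collapses to the identity and the equations reduce to familiar recurrences for half-plane constrained walks.
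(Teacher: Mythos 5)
Your proposal is correct and is essentially the paper's own argument: the paper likewise decomposes words by their first/last letter into $a^{\pm1}$-steps and minimal $b\cdots\bbar$ or $\bbar\cdots b$ excursions whose interiors lie in $\mathcal{L}$ or $\mathcal{K}$, uses $\Phi$ to translate $a$-exponents through the defining relation, and obtains the correction term for $L$ (resp.\ $K$) by subtracting exactly the prefixes whose $a$-exponent is divisible by the relevant modulus, i.e.\ the $\Phi_{N,N}\circ L$ (resp.\ $\Phi_{M,M}\circ K$) piece. The seam issue you flag is vacuous here because words in $\mathcal{H}$ are not freely reduced, so the factorisation is a literal concatenation of letter strings and the generating functions multiply exactly as you describe.
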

We remark that these equations can be transformed into equations for the
cogrowth series by substituting $z \mapsto \frac{t}{1+3t^2}$ and replacing each
generating function $f(z) \mapsto h(t)\left[ \frac{1+3t^2}{1-t^2}\right]$. 
We found it easier to work with the equations as stated.
\begin{proof}

\begin{figure}[h]
 \centering
\includegraphics[height=3cm]{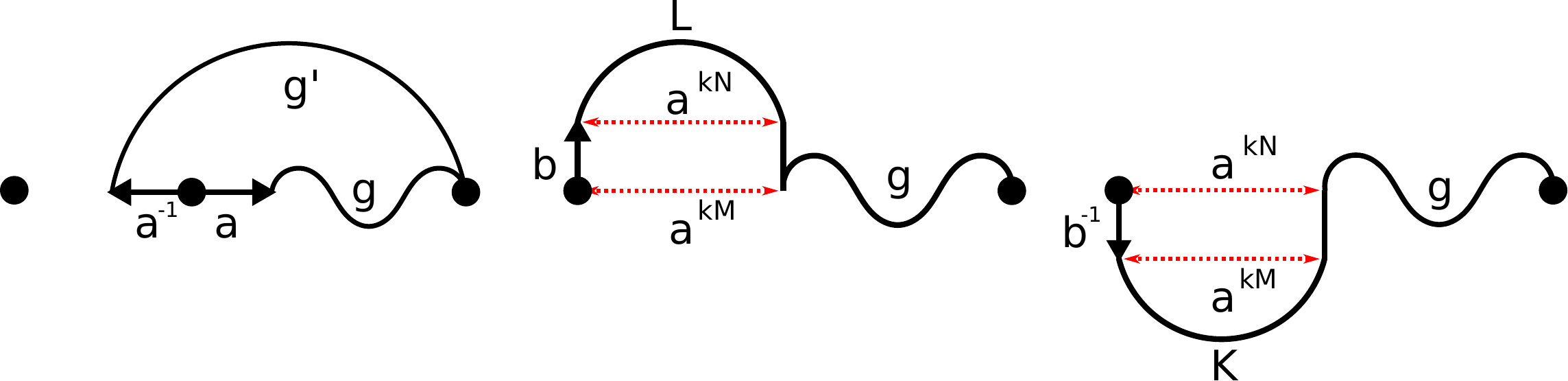}
\caption{Any word in $\mathcal{H}$ can be decomposed by considering its first
letter. There are 5 possibilities, falling into the 4 cases we have drawn
here. The subwords $g, g' \in \mathcal{H}$, $L\in\mathcal{L}$ and
$K\in\mathcal{K}$.}
\label{fig:decompG}
\end{figure}

First, we note that the set $\mathcal{H}$ is closed by prepending and appending
the generator $a$ and $\abar$. We factor $\mathcal{H}$ recursively by
considering the first letter in any word $w \in \mathcal{H}$ (see
Figure~\ref{fig:decompG}). This gives four cases:
\begin{itemize}
\item $w$ is the empty word.
\item The first letter is $a$ or $\abar{}$. Then $w = a v$ or $w = \abar
v$ for some $v \in \mathcal{H}$, increasing the length by $1$ and altering the
$a$-exponent by $\pm1$. At the level of generating functions this gives
$z(q+\qbar) G(z;q)$.
\item The first letter is $b$. Factor $w = u v$ where $u$ is the shortest
word so that $u \in \suba{}$. Thus, $u = b u' \bbar$ for some $u' \in
\suba{N}$. The minimality of $u$ ensures $u' \in \mathcal{L}$. Combined, this
gives $u \in \suba{M}$. At the level of generating functions, the maps words
counted by $z^nq^{kN}$ to $z^{n+2}q^{kM}$ and resulting in $z^2 \cdot \Phi_{N,M}
\circ L(z;q)$.
\item The first letter is $\bbar$. Factor $w = u v$ where $u$ is the
shortest word so that $u \in \suba{}$. As per the previous case, $u =
\bbar u' b$ for some $u' \in \suba{M}$ with $u' \in \mathcal{K}$. Combined, this
gives $u \in \suba{N}$. Similar reasoning gives $z^2 \cdot \Phi_{M,N} \circ
K(z;q)$
\end{itemize}

Now consider an element $w \in \mathcal{L}$, and we note that $\mathcal{L}$
(and $\mathcal{K}$) is closed under appending the generators $a$ and
$\abar$, but not prepending. See Figure~\ref{fig:decompL}. In a similar manner
to the above, we factor words in $\mathcal{L}$ recursively by considering
the last letter of $w$.
\begin{itemize}
\item $w$ is the empty word.
\item The last letter is $a$ or $\abar$. Then $w = v a$ or $w = v \abar$
for some $v \in \mathcal{L}$, increasing the length by $1$ and altering the
$a$-exponent by $\pm1$. This yields the term $z(q+\bar{q}) L(z;q)$.
\item The last letter is $\bbar$. Factor $w = u v$ where $u$ is the
longest subword such that $u \in \suba{}$ and $v$ is non-empty. This forces $v =
b v' \bbar$ with the restriction that $v' \in \mathcal{L}$. Since both $v,v' \in
\mathcal{L}$ we must have $v' \in \suba{N}$ and $v \in \suba{M}$, and this
yields $z^2 L(z;q) \cdot \Phi_{N,M} \circ L(z;q)$.
\item The last letter is $b$. Factor $w = u v$ where $u$ is the longest
subword such that $u \in \suba{}$ and $v$ is non-empty. This forces $v = \bbar{}
v' b$ with the restriction that $v' \in \mathcal{K}$. Further, $w \in
\mathcal{L}$ implies the subword $u\not\in \suba{N}$. Otherwise, $w \notin
\mathcal{L}$ as the subword $u\bbar$ generates an element with normal form
$\bbar a^j$ for some $j$. 

The generating function for $\left\{w \in \mathcal{L} \mid w \not\in
\suba{N}\right\}$ is given by $(L - \Phi_{N,N}\circ L)$, and so this last case
gives  $z^2 (L(z;q) - \Phi_{N,N} \circ L(z;q) ) \cdot \Phi_{M,N} \circ K(z;q)$.
\end{itemize}
Putting all of these cases together and rearranging gives the result.
The equation for $\mathcal{K}$ follows a similar argument.
\end{proof}
\begin{figure}[h]
 \centering
\includegraphics[height=3cm]{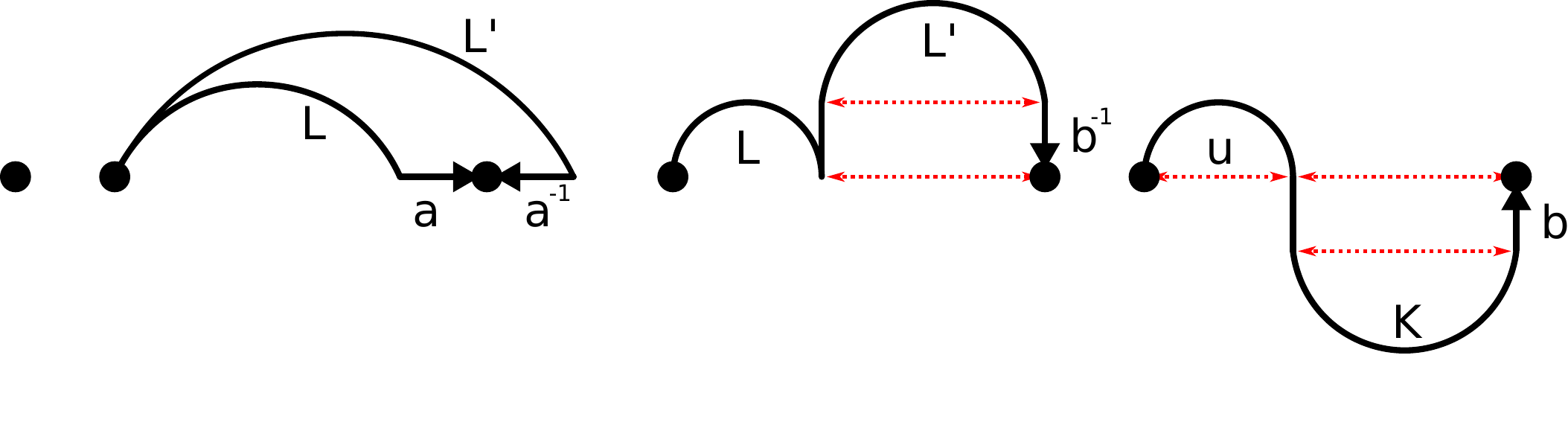}
\caption{Any word in $\mathcal{L}$ can be decomposed by considering its last
letter. This results in the four possible factorisations we have drawn
here. The subwords $L, L' \in\mathcal{L}$, $K\in\mathcal{K}$ and $u$ is a word
in $\mathcal{L}$ that generates an element in the subgroup $\langle a \rangle$,
but not in the subgroup $\langle a^N \rangle$.}
\label{fig:decompL}
\end{figure}

\subsection{Solution for $\mathrm{BS}(N,N)$}
\label{sec bsnn}
The number of trivial words of length $n$ in $\mathbb{Z}^2 \cong \mathrm{BS}(1,1)$ has
long been known to be $\binom{n}{n/2}^2$ (for even $n$)%
\footnote{Perhaps the easiest proof known to the authors is the following. Map
any trivial word to a path on the square grid. Now rotate the grid $45^\circ$
and rescale (by $\sqrt 2$). Each step now changes the $x$-ordinate by $\pm1$ and
similarly each $y$-ordinate by $\pm1$. In a path of $n$-steps, $n/2$ steps must
increase the $x$-ordinate and $n/2$ must decrease it and so giving
$\binom{n}{n/2}$ possibilities. The same occurs independently for the
$y$-ordinates and so we get $\binom{n}{n/2}^2$ possible trivial words.
}. %
This number grows as $4^{n+\nicefrac{1}{2}}/\pi n$,
and the factor of $n^{-1}$ implies that the corresponding generating function is
not algebraic (see, for example, section VII.7 of \cite{\Flajolet}). 
The generating function does satisfy a linear differential equation with
polynomial coefficients and so is D-finite \cite{\Stanley} (in fact it can be
written in closed form in terms of elliptic integrals).  The class of D-finite
functions includes rational and algebraic functions and many of the most famous
functions in mathematics and physics. Indeed, most of the known group growth and
cogrowth series are D-finite (being algebraic or rational\footnote{Kouksov proved that the cogrowth series is a rational function if and only if the group is finite \cite{\KouksovB}.}). We prove (below)
that when $N=M$, the cogrowth series is D-finite and we strongly suspect that
when $N \neq M$, the cogrowth series lies outside this class.
\begin{prop}
\label{prop NN GL eqn}
When $N = M$ the generating functions $K(z;q) = L(z;q)$ and the generating
functions $K=L, G$ satisfy 
 \begin{align*}
 L &=  1 + z(q+\bar{q})L + 2 z^2 L\cdot
  \left[  \Phi_{N,N}\circ L \right] 
   - z^2 \left[ \Phi_{N,N} \circ L \right]^2 \\
  G &= 1 + z(q+\bar{q})G 
  + 2 z^2 G \cdot \left[ \Phi_{N,N} \circ L \right]
\end{align*}
Further, these equations reduce to a set of algebraic equations in $G,L$ and
$\left[ \Phi_{N,N} \circ L \right]$. In particular if we write $L_0(z;q) =
\left[ \Phi_{N,N} \circ L \right]$, and let $\omega = e^{2\pi i / N}$ then we
have
\begin{align*}
  N L_0(z;q) &= \sum_{j=0}^{n-1} L(z; \omega^j q) =
  \sum_{j=0}^n \frac{ 1 - z^2 L_0(z;q)}{
    1 - z(\omega q + 1/\omega q) - 2z^2 L_0(z;q)
  }.
\end{align*}
\end{prop}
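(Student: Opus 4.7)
\emph{Proof proposal.} The plan is to proceed in four steps.

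First, I would show that $K(z;q) = L(z;q)$ when $N = M$. Consider the length-preserving involution $\phi$ on words in $\{a,\abar,b,\bbar\}^*$ that swaps $b \leftrightarrow \bbar$ and fixes $a, \abar$. This extends to an automorphism of the free group $F(a,b)$, and when $N=M$ it sends the relator $r = a^N b a^{-N}\bbar$ to $a^N \bbar a^{-N} b = \bbar r^{-1} b$, which is trivial in $\mathrm{BS}(N,N)$. Thus $\phi$ descends to an automorphism of $\mathrm{BS}(N,N)$ that fixes $\langle a\rangle$ pointwise, preserving $\mathcal{H}$ and the $a$-exponent of every word. Moreover, since $\phi$ acts letter by letter, a prefix of $\phi(w)$ whose normal form equals $ba^j$ pulls back to a prefix of $w$ whose normal form equals $\bbar a^j$, so $\phi$ swaps $\mathcal{L}$ and $\mathcal{K}$. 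The generating functions must therefore coincide.

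Second, substituting $K = L$ and $M = N$ directly into Proposition~\ref{prop GLK eqn} yields the two displayed equations for $L$ and $G$. The $L$-equation is linear in $L$ with coefficients depending on $L_0 := \Phi_{N,N}\circ L$ and on $q$; solving gives
\[
  L(z;q) \;=\; \frac{1 - z^2\, L_0(z;q)^2}{1 - z(q+\qbar) - 2 z^2\, L_0(z;q)}.
\]
Because $\Phi_{N,N}$ retains only monomials $q^{Nj}$ and leaves their exponents unchanged, $L_0(z;q)$ is in fact a function of $z$ and $q^N$; in particular $L_0(z;\omega^j q) = L_0(z;q)$ for every $N$-th root of unity $\omega$.

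Third, I would close the system with the standard roots-of-unity filter
\[
  \Phi_{N,N}\bigl[f(q)\bigr] \;=\; \frac{1}{N}\sum_{j=0}^{N-1} f(\omega^j q),
\]
whose derivation is a one-line geometric series computation. Applying it to $L(z;q)$ and using the $q\mapsto \omega^j q$ invariance of $L_0$ to pull $L_0$-dependent factors out of the sum yields precisely the stated formula for $N L_0(z;q)$. Since the $G$-equation is linear in $G$ once $L_0$ is known, the complete system reduces to a set of polynomial equations in $G, L, L_0$ over $\mathbb{Q}(z,q)$.

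I expect the only genuine subtlety to lie in step one: one must verify that the $b\leftrightarrow\bbar$ symmetry really does act on the \emph{word} sets $\mathcal{L}$ and $\mathcal{K}$ and not merely on their underlying group elements, and this is exactly the place where the hypothesis $N=M$ is used (for $N\neq M$ the relator is not symmetric under this swap). The remaining steps are elementary substitution, solving a linear equation, and invoking a textbook filter on power series in $q$.
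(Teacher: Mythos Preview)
Your proposal is correct and follows the same overall line as the paper: specialise Proposition~\ref{prop GLK eqn} to $N=M$, solve the resulting equation linearly for $L$ in terms of $L_0$, observe that $L_0$ is invariant under $q\mapsto \omega q$, and close up with the roots-of-unity filter. The paper's own proof is extremely terse --- it just says that setting $N=M$ ``forces $K(z;q)=L(z;q)$'' because the two functional equations become literally identical and hence have the same formal power-series solution. Your step one supplies instead a direct combinatorial bijection via the $b\leftrightarrow\bbar$ involution, which is a pleasant alternative and makes the role of the hypothesis $N=M$ more transparent; the remaining steps are exactly what the paper's sketch is implicitly invoking. Incidentally, your solved form $L=(1-z^2L_0^2)/(1-z(q+\qbar)-2z^2L_0)$ has the correct numerator; the $1-z^2L_0$ appearing in the displayed statement is a typo.
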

For example, for $\mathrm{BS}(2,2)$ the generating function $G(z;q)$ satisfies the
following cubic equation
\begin{align}
\label{eqn bs22 gzq}
1+3zQG - (1-4z^2-z^2Q^2)G^2 - zQ(1-zQ-2z)(1-zQ+2z)G^3 & = 0,
\end{align}
where we have written $Q = q + \bar{q}$.
\begin{proof}
The proof is a corollary of Proposition~\ref{prop GLK eqn}. Setting
$N=M$ simplifies the equations considerably and forces $K(z;q) = L(z;q)$.
We note that $L_0(z;q) = L_0(z;\omega q)$ and the equation for $L_0(z;q)$
follows. Hence both $L(z;q)$ and $G(z;q)$ are also algebraic.
\end{proof}

We are not interested in the full generating function $G$, rather we
are mainly interested in the coefficient of $q^0$.
\begin{cor}
For $\mathrm{BS}(N,N)$ the generating function $[q^0] G(z;q) = \sum g_{n,0}z^n$ is
D-finite. That is, it satisfies a linear generating function with polynomial
coefficients. Furthermore, the cogrowth series (being the generating function of
freely reduced words equivalent to the identity) is also D-finite.

It follows that the cogrowth of $\mathrm{BS}(N,N)$ is an algebraic number.
\end{cor}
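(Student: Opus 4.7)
The plan is to derive D-finiteness from the algebraicity of $G(z;q)$ provided by Proposition~\ref{prop NN GL eqn}, then convert to the cogrowth series by the standard substitution, and finally read off algebraicity of the cogrowth from the shape of a linear ODE with polynomial coefficients.

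First, Proposition~\ref{prop NN GL eqn} shows that $G(z;q)$ is algebraic over $\mathbb{Q}(z,q)$ --- equation~\Ref{eqn bs22 gzq} exhibits this cubic explicitly for $N=2$, and the same argument supplies a polynomial relation for every $N$. Because the $a$-exponent of a length-$n$ word in $\mathcal{H}$ has magnitude at most $n$, the series $G(z;q)$ is a formal power series in $z$ whose coefficients are Laurent polynomials in $q$, so the operation $[q^0]$ is well-defined. I would then invoke the classical closure result --- due to Lipshitz \cite{\Lips}, building on Furstenberg's diagonal theorem --- that coefficient extraction $[q^k]$ (for fixed integer $k$) applied to an algebraic bivariate series yields a D-finite univariate series. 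With $k=0$ this gives the D-finiteness of $[q^0] G(z;q) = \sum_n g_{n,0} z^n$.

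Second, I would pass from this series, which counts trivial words with horocyclic $a$-exponent zero but with no free-reduction constraint, to the cogrowth series counting freely reduced trivial words. The remark after Proposition~\ref{prop GLK eqn} records the standard transformation between them: substitute $z \mapsto t/(1+3t^2)$ and multiply by the rational function $(1+3t^2)/(1-t^2)$. Both operations --- composition with an algebraic function and multiplication by a rational function --- preserve D-finiteness, so the cogrowth series is D-finite.

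Finally, let $C(t)$ denote the cogrowth series. Being D-finite, it satisfies a linear ODE $p_d(t) C^{(d)}(t) + \cdots + p_0(t) C(t) = 0$ with each $p_i \in \mathbb{Q}[t]$; its singularities lie among the roots of $p_d$, which are algebraic numbers, so the radius of convergence $\rho$ equals the modulus of some algebraic number. Since $|z_0| = \sqrt{z_0 \overline{z_0}}$ is algebraic whenever $z_0$ is, $\rho$ is algebraic, and hence so is the cogrowth $1/\rho$. The main obstacle is the first step: locating and citing the precise form of the coefficient-extraction closure result that applies cleanly in the Laurent-in-$q$ setting at hand --- everything else is routine use of algebraic substitution, rational multiplication, and the structural fact that D-finite singularities come from roots of the leading polynomial coefficient of the defining ODE.
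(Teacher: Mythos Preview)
Your proposal is correct and follows essentially the same route as the paper: algebraicity of $G(z;q)$ from Proposition~\ref{prop NN GL eqn}, Lipshitz's constant-term closure to get D-finiteness of $[q^0]G(z;q)$, the rational substitution to pass to the cogrowth series, and then reading off algebraicity of the singularity from the leading coefficient of the governing ODE. If anything you are slightly more careful than the paper, flagging the Laurent-in-$q$ issue and the fact that the radius of convergence is a priori only the \emph{modulus} of an algebraic number; the paper's proof glosses over both points.
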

\begin{proof}
Every algebraic power series also satisfies a linear differential equation with
polynomial coefficients (see \cite{\Stanley} for many basic facts about D-finite
series). It is known \cite{\Lips} that the constant term of a D-finite series of
two variables is a D-finite series of a single variable. Substituting an
algebraic series into a D-finite series gives another D-finite series, and so
transforming from $[q^0]G(z;q)$ to the cogrowth series (which is done by
substituting a rational function) yields another D-finite series.

Finally, if a function satisfies a linear differential equation, then its
singularities must correspond to zeros of the coefficient of the highest order
derivative. Since the cogrowth series is D-finite, its singularities must be
the zeros of the polynomial coefficient of the highest order derivative.
\end{proof}
While the results used to prove the above corollary guarantee the existence of
such differential equations, they do not give recipes for determining them.
There has been a small industry in developing algorithms to do exactly this
task (and many other operations on D-finite series) --- for example
work by Zeilberger, Chyzak and others. Here we have used recent algorithms
developed by Chen, Kauers and Singer \cite{\Kauers}, and we are grateful for
Manuel Kauers' help in the application of these tools.

Applying the algorithms described in \cite{\Kauers} to the generating function
$G(z;q)$ for $\mathrm{BS}(2,2)$ which is the solution of equation~\Ref{eqn bs22 gzq} we
found a $6^\mathrm{th}$ order linear differential equation satisfied by
$[q^0]G(z;q)$. Unfortunately the polynomial coefficients of this equation have
degrees up to 47. We were also able to guess slightly more appealing equations
of higher order with lower degree coefficients, but all are too large to list
here. 

For $\mathrm{BS}(3,3)$ and $\mathrm{BS}(4,4)$ we obtain the following equations for $G(z;q)$
(where $Q=q+\bar{q}$)
\begin{multline}
1
+4zQG
+(6Q^2z^2-z^2-1)G^2
+2z(Qz+1)(Q^2z-Q+2z)G^3\\
+z^2(1-Q)(1+Q)(Qz+2z-1)(Qz-2z-1)G^4 = 0
\end{multline}
and
\begin{multline}
1
+5GQz
+(10Q^2z^2-2z^2-1)G^2
+z(10Q^3z^2-6Qz^2-3Q+4z)G^3\\
+z^2(3Q^4z^2+2Q^2z^2-3Q^2+8Qz-8z^2+2)G^4\\
-z^3Q(Q^2-2)(Qz+2z-1)(Qz-2z-1)G^5 =0
\end{multline}
Again applying the same methods, we found an ODE of order 8 with coefficients of
degree up to 105 for $\mathrm{BS}(3,3)$ and for $\mathrm{BS}(4,4)$ it is order 10 with
coefficients of degree up to 154. Using clever guessing techniques
(see \cite{\KauersGuess} for a description) Kauers also found DEs for
$N=5,\dots,10$. For $\mathrm{BS}(5,5)$ the DE is order 12 with coefficients of degree up
to 301. While that of $\mathrm{BS}(10,10)$ took about 50 days of computer time to guess
and is 22nd order with coefficients of degree up to 1153; when written in text
file is over 6 Mb! We note that the ODEs found for $N=2,3,4$ have been proved,
but it is beyond current techniques\footnote{While there is no theoretical
barrier, the time taken by the computations seems to grow quickly with $N$ and
exceed the available time.} to prove those found for higher $N$.

Clearly this approach is not a practical means to study the cogrowth for
larger $N$ --- though one can generate series expansions quite quickly
using a computer. We are able to determine the radius of convergence of
$[q^0]G(z;q)$ for much higher $N$ via the following lemma.
\begin{lemma}
\label{lem gz1 gz0}
For $\mathrm{BS}(N,N)$, the generating functions $G(z;1)$ and $[q^0]G(z;q)$ have the same radius of
convergence.
\end{lemma}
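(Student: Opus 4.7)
The plan is to translate the analytic statement about radii of convergence into a statement about exponential growth rates of the coefficient sequences, and then to compare these directly using a simple combinatorial argument that exploits the subgroup structure underlying $\mathcal{H}$.

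Set $H(n) = \sum_k g_{n,k}$, so that $G(z;1) = \sum_n H(n) z^n$ and $[q^0]G(z;q) = \sum_n g_{n,0} z^n$. Both are power series with nonnegative coefficients, so their radii of convergence are the reciprocals of $\tau_H := \limsup_n H(n)^{1/n}$ and $\tau_0 := \limsup_n g_{n,0}^{1/n}$. One inequality is immediate: the pointwise bound $g_{n,0} \leq H(n)$ gives $\tau_0 \leq \tau_H$, hence the radius of $[q^0]G(z;q)$ is at least that of $G(z;1)$. The entire content of the lemma is the reverse inequality $\tau_H \leq \tau_0$.

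For the reverse inequality, I would use two structural facts. First, $\mathcal{H}$ is defined by membership in the subgroup $\langle a\rangle$, which is closed under multiplication, so concatenating (without free reduction) a length-$n$ word of $a$-exponent $k$ with a length-$n$ word of $a$-exponent $-k$ produces a length-$2n$ trivial word in $\mathcal{H}$; distinct pairs give distinct concatenations (one recovers the split at position $n$). Second, formal inversion of a word (reverse the letters and negate each one) is a length-preserving bijection between words of $a$-exponent $k$ and those of $a$-exponent $-k$, so $g_{n,-k} = g_{n,k}$. Combining these yields the crucial estimate
\begin{equation*}
g_{2n,0} \;\geq\; g_{n,k}\,g_{n,-k} \;=\; g_{n,k}^2, \qquad \text{so} \qquad g_{n,k} \;\leq\; \sqrt{g_{2n,0}}.
\end{equation*}

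It remains to bound how many values of $k$ can contribute to $H(n)$. I would argue that in $\mathrm{BS}(N,N)$ the $a$-exponent of any element of $\langle a\rangle$ coincides with its image under abelianisation: the relator $a^Nba^{-N}b^{-1}$ is a commutator, so the abelianisation is simply $\mathbb{Z}^2$ and the $a$-coordinate of a word equals the signed count of its $a$-letters. In particular $g_{n,k} = 0$ whenever $|k| > n$, giving
\begin{equation*}
H(n) \;=\; \sum_{|k|\leq n} g_{n,k} \;\leq\; (2n+1)\sqrt{g_{2n,0}}.
\end{equation*}
Taking $n$-th roots produces $\tau_H \leq \limsup_n g_{2n,0}^{1/(2n)} = \tau_0$, completing the proof. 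The only step requiring any care is the abelianisation identification that forces $|k|\leq n$; everything else is a short combinatorial manipulation.
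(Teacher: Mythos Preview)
Your proof is correct and uses the same core ingredients as the paper: the symmetry $g_{n,-k}=g_{n,k}$, the linear bound $|k|\le n$ (which the paper simply asserts, while you justify it via the abelianisation of $\mathrm{BS}(N,N)$), and concatenation of a word of exponent $k$ with one of exponent $-k$ to land in $g_{2n,0}$. The organization differs slightly: the paper singles out the ``most popular'' exponent $k^*$, establishes $\limsup g_{n,k^*}^{1/n}=\mu$, and then concatenates $M$ pairs before letting $M\to\infty$, whereas you bound every $g_{n,k}$ by $\sqrt{g_{2n,0}}$ in one step and sum over $k$ directly. Your route is a bit more economical---it avoids both the most-popular detour and the auxiliary limit in $M$---but the underlying idea is the same.
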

\begin{proof}
We start with some notation. Write
\begin{align}
  G(z;q) &= \sum_{n=0}^\infty \sum_{k=-n}^n g_{n,k} z^n q^k 
  & G(z;1) &= \sum_{n=0}^\infty g_n z^n
\end{align}
Note that we have $g_{n,-k} = g_{n,k}$ and that $g_{n,k}=0$ for $|k|>n$. Write
$\limsup g_n^{1/n} = \mu$ and $\limsup g_{n,0}^{1/n} = \mu_0$. Since all the
$g_{n,k}$ are non-negative, we immediately have $\mu \geq \mu_0$.

To prove the reverse inequality we use a ``most popular'' argument that is
commonly used in statistical mechanics to prove equalities of free-energies
(see \cite{\Hammersley} for example). Fix $n$, then there exists $k^*$
(depending on $n$) so that $g_{n,k^*} \geq g_{n,k}$ --- the number $k^*$ is the
``most popular'' $a$-exponent in all the trivial words of length $n$
contributing to the generating function $G$. We have 
\begin{align}
  \label{eqn gnkstar}
  g_{n,k^*} \leq g_n \leq (2n+1) g_{n,k^*}
\end{align}
And hence $\limsup g_{n,k^*}^{1/n} = \mu$. Note that numerical experiments show
that $k^* = 0$ --- the distribution is tightly peaked around 0.

Keeping $n$ fixed, consider a word that contributes to $g_{n,k^*}$ and another
that contributes to $g_{n,-k^*}$. Concatenating them together gives a word that
contributes to $g_{2n,0}$. So considering all possible concatenation of $M$
such pairs of words gives the following inequality
\begin{align}
  g_{n,k^*}^M g_{n,-k^*}^M  = g_{n,k^*}^{2M} & \leq g_{2Mn,0}
\intertext{Raise both sides to the power $\nicefrac{1}{2nM}$ and let $M \to
\infty$ gives} 
  g_{n,k^*}^{1/n} &\leq \mu_0
\end{align}
Letting $n \to \infty$ then shows that $\mu \leq \mu_0$.
\end{proof}
We have observed that the statement of the lemma appears to hold for  Baumslag-Solitar
groups $\mathrm{BS}(M,N)$ for $M\neq N$ also, however the above proof breaks down in the general case as the number
of summands in equation~\Ref{eqn gnkstar} grows exponentially with $n$ rather
than linearly.

Combining Proposition~\ref{prop NN GL eqn} and the above lemma we can
establish the growth rates of trivial words $\mu$ and the corresponding
cogrowths $\lambda$ for the first few values of $N$ (see Table~\ref{tab mu NN}).
Unfortunately we have not been able to find a general form for these
numbers. Some simple numerical analysis of these numbers suggests that the
growth rate approaches $\sqrt{12}$ exponentially with increasing $N$. This
finding agrees with work of Guyot and Stalder \cite{\GStald}, discussed below,
who examined the limit of the {\em marked} groups $\mathrm{BS}(M,N)$ as $M,N\rightarrow
\infty$, and found that the groups tend towards the free group on two letters,
which has an asymptotic cogrowth rate of  $\sqrt{12}$.

\begin{table}
\begin{center}
\begin{tabular}{|c|c||c|}
\hline
$N$ & $\mu$ & $\lambda$ \\
\hline
1& 4 & 3 \\
2& 3.792765039 & 2.668565568 \\
3& 3.647639445 & 2.395062561 \\
4& 3.569497357 & 2.215245886 \\
5& 3.525816111 & 2.091305394 \\
6 & 3.500607636 & 2.002421757\\
7 &  3.485775158 & 1.936941986 \\
8 & 3.476962757 & 1.887871818 \\
9 & 3.471710431 & 1.850717434 \\
10 & 3.468586539 & 1.822458708 \\
\hline
\end{tabular}
\end{center}
\caption{The growth rate $\mu$ of trivial words in $\mathrm{BS}(N,N)$ and the
corresponding cogrowth $\lambda$. Note that $\mu$ and $\lambda$ are related by 
$\mu = \lambda + 3/\lambda$, and that the growth rate of trivial words in the
free group on 2 generators is $\sqrt{12} = 3.464101615$.}
\label{tab mu NN}
\end{table}

We remark that for $\mathrm{BS}(1,1)\cong \mathbb Z^2$ the number of trivial words is known exactly and
hence so is the dominant asymptotic form
\begin{align*}
  g_{n,0} &= \binom{n}{n/2}^2 \sim \frac{2}{\pi n} \cdot 4^n 
& \mbox{for even $n$}.
\end{align*}
In the case of $N=2,3,4,5$ we can show from the differential equations found
above that
\begin{align*}
  g_{n,0} & \sim A_N \mu_N^n n^{-2}
& \mbox{for even $n$}
\end{align*}
where $\mu_N$ is given in the previous corollary and we have estimated the
amplitudes to be
\begin{align*}
  A_2 &= 12.47372070225776\dots &
  A_3 &= 10.81007294255599\dots \\
  A_4 &= 12.14125535742978\dots &
  A_5 &= 14.73149478993552\dots.
\end{align*}
Unfortunately we have not been able to identify these constants, but these
observations lead to the following conjecture.

\begin{conj}
The number of trivial words in $\mathrm{BS}(N,N)$ grows as
\begin{align*}
  g_{n,0} \sim A_N \mu_N^n n^{-2}
\end{align*}
for $N \geq 2$.
\end{conj}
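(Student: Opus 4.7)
The plan is to attack the conjecture via singularity analysis of the diagonal $[q^0]G(z;q)$, starting from the algebraic data provided by Proposition~\ref{prop NN GL eqn}. Solving the $G$-equation directly gives
\begin{equation*}
  G(z;q) = \frac{1}{1 - z(q+\bar q) - 2z^2 L_0(z;q)},
\end{equation*}
with $L_0$ algebraic in $z$ and $q^N$, and extracting the constant term in $q$ can be written as the contour integral
\begin{equation*}
  [q^0]G(z;q) = \frac{1}{2\pi}\int_{-\pi}^{\pi} G(z;e^{i\theta})\, d\theta.
\end{equation*}
By the corollary above, this diagonal is D-finite, so the Flajolet--Sedgewick transfer theorems already guarantee that the dominant asymptotics take the form $\mu_N^n n^{\alpha_N}(\log n)^{k_N}$ with $\mu_N = 1/\rho_N$. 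The conjecture then reduces to showing $\alpha_N = -2$ and $k_N = 0$ for every $N \geq 2$, and subsequently identifying the amplitude $A_N$.

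My first step would be to locate the singularity precisely: by positivity and the $q \to \bar q$ symmetry, $\rho_N$ should be the smallest positive $z$ at which the denominator $1 - 2z - 2z^2 L_0(z;1)$ vanishes, and Lemma~\ref{lem gz1 gz0} ensures that this $z$ is in fact the correct radius for the diagonal $[q^0]G$ and not merely for $G(z;1)$. The second step is to expand the denominator of $G$ locally near $(z,\theta) = (\rho_N, 0)$ and use the contour integral to translate the singular structure of $L_0(z;1)$ into that of $[q^0]G(z;q)$. A generic saddle-point evaluation of the $\theta$-integral would yield a square-root singularity $(\rho_N - z)^{-1/2}$ and hence the \emph{wrong} exponent $n^{-1/2}$; obtaining $n^{-2}$ forces a cancellation under which the leading saddle-point contribution vanishes, leaving a subdominant $(\rho_N - z)\log(\rho_N - z)$ term --- precisely the singular form that, under the transfer theorem, produces $n^{-2}$ coefficients.

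The hard part will be establishing this cancellation uniformly in $N$. For the specific values $N \in \{2,3,4,5\}$ the authors already possess the explicit linear differential equation for $[q^0]G(z;q)$, and the conjecture can be verified by computing the indicial equation at $\rho_N$ and checking that the local exponent $+1$ with a logarithmic companion is the one selected by the initial data. Extending this uniformly in $N$ demands a handle on the local behaviour of $L_0(z;1)$ near $z = \rho_N$ that survives the growth of the defining polynomial as $N$ increases. My expectation is that $L_0(z;1)$ has a square-root singularity at $z = \rho_N$ whose coefficient is tuned in precisely the right way to kill the saddle-point leading term, by analogy with the behaviour of radial random walks on free products where a similar cancellation is known. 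Tracking the constants through this cancellation should then yield an expression for $A_N$, likely not in elementary closed form, that matches the numerical values tabulated above.
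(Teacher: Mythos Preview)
The statement you are attempting to prove is a \emph{conjecture} in the paper, not a theorem: the authors do not prove it. Their evidence is a case-by-case verification for $N\in\{2,3,4,5\}$, obtained by analysing the explicit linear ODEs for $[q^0]G(z;q)$ computed (with Kauers' help) from the algebraic equation for $G(z;q)$; from those ODEs they read off the exponent $-2$ and estimate the amplitudes $A_N$ numerically. For $N\geq 5$ the ODEs are only guessed, not proved, and for general $N$ nothing is established. So there is no ``paper's proof'' to compare your proposal against.

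Your outline goes well beyond what the paper achieves, and the strategy --- write $G$ as $1/(1-z(q+\bar q)-2z^2L_0)$, extract the constant term as a $\theta$-integral, and do singularity analysis --- is a sensible plan of attack. But the proposal has a genuine gap exactly where you flag ``the hard part'': you do not establish the cancellation that would turn the naive saddle-point exponent into $n^{-2}$, you only assert it by analogy with free products. That cancellation is precisely what separates the conjecture from a theorem; the authors had the same algebraic input (Proposition~\ref{prop NN GL eqn}) and the same D-finiteness, and could not close this step uniformly in $N$. A secondary gap: D-finiteness alone does not force the dominant asymptotics to be of the shape $\mu_N^n n^{\alpha}(\log n)^k$ --- you would first need to show that $\rho_N$ is a regular singular point of the ODE and that the relevant local solution is the one selected by the initial data, which for general $N$ you cannot do without the explicit ODE. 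Until you can control the local expansion of $L_0(z;q)$ near $(z,\theta)=(\rho_N,0)$ uniformly in $N$ and exhibit the claimed cancellation, the proposal is a heuristic programme rather than a proof.
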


\subsection{Continued fractions and $\mathrm{BS}(1,M)$}
When we set $N=1$ cancellations occur and the equation for $L$ becomes a
$q$-deformation of a Catalan generating function:
\begin{align}
 L(z;q) &= 1 + z(q+\bar{q})L(z;q) + z^2 L(z;q) L(z;q^M) 
  = \frac{1}{1 - z(q+\bar{q})-L(z;q^M)},\nn
 L(z;1) &= \frac{1-2z-\sqrt{1-4z}}{2z^2}.
\end{align}
Setting $q=1$ into the first equation reduces it to algebraic and it is readily
solved to give $L(z;1)$ which is the generating function of the Catalan
numbers. Thus $L(z;q)$ is a $q$-deformation of the Catalan numbers and
rearranging the first equation shows that $L(z;q)$ has a simple continued
fraction expansion.
\begin{align}
  L(z;q) &= \cfrac{1}{
  1-z(q+q^{-1})- \cfrac{z^2}{
  1-z(q^{M}+q^{-M})- \cfrac{z^2}{
  1-z(q^{M^2}+q^{-M^2})- 
  \cdots
  }
  }
  }.
\end{align}
Such continued fraction forms are well known and understood in Catalan objects
(see \cite{\FlajoletContFrac} for example). Unfortunately the equation for $K$
does not simplify:
\begin{align}
  K 
  &= 1 + z(q+\bar{q})K + z^2 L(z;q^M) \cdot \left[K - \Phi_{M,M}\circ K\right]
  + z^2 K \cdot \left[ \Phi_{M,1} \circ K \right].
\end{align}
Though as noted above $K(z;1) = L(z;1)$ and so we expect $K(z;q)$ to be a
different $q$-deformation of the Catalan numbers. For $G$ we have made even
less progress and we have not found $G(z;1)$, let alone $G(z;q)$, in closed
form. Because of the $q$-deformed nature of $L(z;q)$ we conjecture the following
\begin{conj}
 For Baumslag-Solitar groups $\mathrm{BS}(1,M)$ with $M>1$, the generating functions
$G(z;q)$ and $[q^0]G(z;q)$ are not D-finite.
\end{conj}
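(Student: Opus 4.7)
The plan is to exploit the Mahler-type recursion
\[
  L(z;q) = \frac{1}{1 - z(q+q^{-1}) - z^2\, L(z;q^M)},
\]
whose defining feature is the substitution $q \mapsto q^M$ with $M>1$. Iterating produces the infinite continued fraction displayed above and exhibits the self-similar scaling under $q \mapsto q^M$ that is the signature of a Mahler function. Classical results of Rand\'e and B\'ezivin (later extended by Adamczewski--Bell, and by Dreyfus--Hardouin--Roques--Singer to nonlinear settings) say that a function simultaneously satisfying a nontrivial Mahler-type equation and being D-finite must reduce to a rational function.

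My first step would be to rule out rationality of $L(z;q)$. Setting $q=1$ recovers $L(z;1) = (1-2z-\sqrt{1-4z})/2z^2$, which is algebraic but not rational, so $L(z;q)$ is not rational in $(z,q)$; combined with a Mahler rationality theorem of the above type this gives non-D-finiteness of $L(z;q)$. A more self-contained alternative is to track singularities directly: for fixed small $z$, the denominators $1 - z(q^{M^k}+q^{-M^k})$ appearing in the continued fraction contribute poles at roots of unity of orders $M^k$, which accumulate densely on $|q|=1$ to produce a natural boundary, ruling out D-finiteness in $q$ on analytic grounds alone.

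The next step is to bootstrap this to $G(z;q)$. For BS$(1,M)$, the functional equation for $G$ from Proposition~\ref{prop GLK eqn} involves $L(z;q^M)$ together with a $\Phi$-truncation of $K(z;q)$, and D-finiteness is closed under rational combinations and under the invertible substitution $q \mapsto q^M$. So if $G(z;q)$ were D-finite, then so would $L(z;q^M)$ and hence $L(z;q)$, contradicting the previous step. The hard part, and the main obstacle, is the constant-term statement for $[q^0]G(z;q)$: while constant-term extraction preserves D-finiteness in the forward direction, a non-D-finite bivariate series can in principle have a D-finite constant term. My plan here would be asymptotic --- use the Mahler nesting to extract the dominant behaviour of $g_{n,0}$ and exhibit log-periodic oscillations in $\log_M n$, which are incompatible with the rigid asymptotic shape $C \mu^n n^\alpha (\log n)^\beta$ (with $\alpha$ algebraic and $\beta \in \mathbb{Z}_{\geq 0}$) forced on the coefficients of a D-finite series. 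The crux is showing that the Mahler self-similarity is not accidentally smoothed out in passing to the constant term, most plausibly by deriving a Mahler-like functional equation for $[q^0]G(z;q)$ itself directly from the bivariate system, and then invoking either the rationality theorem or the coefficient-asymptotics obstruction one final time.
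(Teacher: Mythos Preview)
The statement is a \emph{conjecture} in the paper, not a theorem: the authors offer no proof, only the heuristic that the $q$-deformed continued fraction for $L(z;q)$ (with its $q\mapsto q^M$ self-similarity) makes D-finiteness implausible. So there is no ``paper's own proof'' to compare against; your proposal is an attempt to turn that heuristic into an argument, and it should be evaluated as such.

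Your outline has real content but also real gaps. For $L(z;q)$, the Mahler rationality theorems you cite (Rand\'e, B\'ezivin, Adamczewski--Bell) are stated for univariate series satisfying $P(z,f(z),f(z^M))=0$ and being D-finite \emph{in the same variable}. Here the Mahler scaling is in $q$ while the D-finiteness of interest is primarily in $z$ (and the bivariate holonomic notion is yet another thing); you would need either a parametric version of those theorems or a careful specialisation argument, and you have not supplied one. Your alternative natural-boundary argument is also shaky: for fixed small $z$ the level-$k$ denominator is $1 - z(q^{M^k}+q^{-M^k}) - z^2(\cdots)$, which for $|q|=1$ stays bounded away from $0$, so the poles you want do not sit on the unit circle and the claimed accumulation there is not established.

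The bootstrap to $G(z;q)$ has a concrete gap. With $N=1$ the $G$-equation reads $G = 1 + z(q+\bar q)G + z^2 G\big(\Phi_{1,M}\!\circ L + \Phi_{M,1}\!\circ K\big)$, so from D-finiteness of $G$ you can only extract D-finiteness of the \emph{sum} $L(z;q^M)+\Phi_{M,1}\!\circ K$, not of $L(z;q^M)$ alone; you have not shown how to decouple $K$, and $\Phi_{M,1}$ is a section operator rather than an algebraic substitution. Finally, you correctly flag the constant-term claim $[q^0]G(z;q)$ as the hard part and honestly note that non-D-finiteness of a bivariate series does not transfer to its constant term; your proposed remedy (log-periodic oscillations in $g_{n,0}$) is a plausible heuristic, but nothing in your sketch actually derives a Mahler-type equation for $[q^0]G$ or controls the asymptotics of $g_{n,0}$ --- indeed the paper itself reports being unable to extract clean asymptotics even numerically. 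As it stands, your proposal is a credible research plan, not a proof, and the conjecture remains open.
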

Since any path that contributes to $K$ or $L$ must also contribute to $G$, it
follows that the radius of convergence of $G(z;1)$ is at most $\nicefrac{1}{4}$
--- and of course cannot be any smaller. Since the groups $\mathrm{BS}(1,N)$ are all
amenable, we know that $g_{n,0} \sim 4^n$. We have been unable to prove any more
precise details of the asymptotic form, though it is not unreasonable to expect
that 
\begin{align}
  g_{n,0} \sim A 4^n n^{-\gamma_M}.
\end{align}
While we have been able to generate the first 50-60 terms of the series for $M
\leq 5$ by iterating the equations, the series are quite badly behaved and
we have been unable to produce reasonable estimates of $\gamma_M$.

\subsection{When $N \neq M$}
When $N \neq M$, we expect that the operators $\Phi_{N,M}$ and $\Phi_{M,N}$ in
the equations satisfied by $G,K,L$ give rise to $q$-deformations similar
to those observed above. In light of this, we extend our previous
conjecture:
\addtocounter{conj}{-1}
\begin{conj}[Extended from the above]
For Baumslag-Solitar groups $\mathrm{BS}(N,M)$ the generating functions $G(z;q)$ and
$[q^0]G(z;q)$ are only D-finite when $N = M$.
\end{conj}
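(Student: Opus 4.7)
The statement is a genuine open conjecture, and proving non-D-finiteness is notoriously delicate, so my proposal is for a program rather than a short proof. The overarching idea is to rule out D-finiteness of $[q^0]G(z;q)$ by showing that the asymptotics of its coefficients are incompatible with those forced by a linear ODE with polynomial coefficients. Recall that a D-finite power series $\sum a_n z^n$ must have finitely many singularities, each of regular (Fuchsian) type, and its coefficients must admit an asymptotic expansion $a_n \sim C \mu^n n^{\alpha}(\log n)^{\beta}$ with $\alpha \in \mathbb{Q}$ and $\beta \in \mathbb{Z}_{\geq 0}$. Producing an irrational critical exponent, or an essential singularity, would therefore suffice.

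The plan is to attack the anchor case $\mathrm{BS}(1,M)$ with $M>1$ first. Here the equation for $L(z;q)$ in Section~\ref{sec bsnn} takes the form of a genuine $q$-dilation $L(z;q) = 1 + z(q+\bar q) L(z;q) + z^2 L(z;q) L(z;q^M)$, i.e.\ a Mahler-type functional equation in $q$ with multiplier $M$. Iterating yields the continued-fraction expansion already displayed, whose successive denominators involve $q^{M^j}+q^{-M^j}$. Standard arguments for Mahler functions (a la Randé, Bell--Coons, or the classical work of Mahler on natural boundaries) show that such series, viewed as functions of $q$ for fixed $z$, accumulate singularities at all $M$-power roots of unity and so admit the unit circle as a natural boundary. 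The first step is to promote this $q$-side transcendence into a $z$-side statement: analyse how the dominant singularity of $L(z;q)$ as a function of $z$ depends on $q$, and track this dependence through the equation $G(z;q) = \bigl(1 - z(q+\bar q) - 2z^2 L(z;q)\bigr)^{-1}$ for $\mathrm{BS}(1,M)$ (once the $L=K$ identity, valid only when $N=M$, is replaced by a direct computation of $K$).

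The next step is the extraction of the constant term $[q^0]G(z;q)$. My strategy is singularity analysis in the style of Flajolet--Sedgewick combined with a saddle-point or Darboux extraction in $q$. Concretely, write $[q^0]G(z;q) = \frac{1}{2\pi i}\oint G(z;q)\,\tfrac{\dee q}{q}$ and study the confluence of $q$-singularities of $G$ as $z \to 1/\mu_N$. The hoped-for conclusion is an expansion $g_{n,0} \sim A \cdot \mu_N^n \cdot n^{-\gamma_M}$ with $\gamma_M$ expressible in terms of $\log M$ (or logarithms of quantities built from $M$), and hence irrational for $M \geq 2$; this immediately precludes D-finiteness. For the general $N \ne M$ case, the operators $\Phi_{N,M}$ in Proposition~\ref{prop GLK eqn} produce a coupled pair of Mahler-type dilations (effectively $q \mapsto q^{M/N}$ after tracking parity classes), and the same programme should apply once one establishes that the coupled system still has a natural boundary in $q$ away from $N=M$.

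The main obstacle is precisely this last point: the step from multi-variable transcendence in $q$ to one-variable transcendence of $[q^0]G(z;q)$ in $z$. Constant terms can and do collapse transcendental behaviour (witness diagonals of rational series, which are D-finite). What rescues us in principle is that here the $q$-dependence is \emph{not} rational but genuinely Mahler, and integration against $\dee q/q$ should not regularise it; making this precise requires careful singularity analysis of a Mahler function on its natural boundary, which is the technically hardest part of the programme. A secondary obstacle is computing $\gamma_M$ explicitly enough to prove irrationality; a fallback, should the exponent turn out to be rational, is to look instead for $\log n$ factors of non-integer exponent or for infinitely many $z$-singularities inherited from the $q$-dilation structure, either of which would also defeat D-finiteness.
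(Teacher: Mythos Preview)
The statement is a \emph{conjecture}, and the paper offers no proof whatsoever---only the heuristic observation that the continued-fraction expansion of $L(z;q)$ for $\mathrm{BS}(1,M)$ exhibits a genuine $q$-dilation $q \mapsto q^M$, and that for general $N\neq M$ the operators $\Phi_{N,M}$ should produce analogous $q$-deformed structure. There is therefore nothing in the paper to compare your proposal against; you correctly identify the statement as open and present a research programme rather than a proof, which is the only honest response available.

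Two technical remarks on the programme itself. First, your asymptotic criterion needs sharpening: for an \emph{arbitrary} D-finite series the exponent $\alpha$ in $a_n \sim C\mu^n n^{\alpha}(\log n)^{\beta}$ is only guaranteed to be algebraic (it is a root of the indicial polynomial at the dominant singularity), not rational. Rationality of $\alpha$ does follow in the present setting, but only because the coefficients $g_{n,0}$ are non-negative integers with at most geometric growth, making the series a $G$-function; one must then invoke the Chudnovsky--Andr\'e theorem on global nilpotence to force rational exponents. You should make this arithmetic step explicit, since without it an irrational-but-algebraic $\gamma_M$ would not contradict D-finiteness. Second, the formula $G(z;q) = \bigl(1 - z(q+\bar q) - 2z^2 L(z;q)\bigr)^{-1}$ you quote is the $N=M$ simplification and is false for $\mathrm{BS}(1,M)$ with $M>1$: the equation for $G$ from Proposition~\ref{prop GLK eqn} reads $G = 1 + z(q+\bar q)G + z^2 G\cdot\bigl[L(z;q^M) + \Phi_{M,1}\circ K\bigr]$, and here $K \neq L$ and $\Phi_{M,1}\circ K \neq L(z;q^M)$. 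Your parenthetical acknowledges the issue but does not repair the formula.

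Your diagnosis of the main obstacle---passing from a natural boundary in $q$ to non-D-finiteness of the constant term in $z$---is exactly right and is where any genuine proof would have to do real work. The paper has nothing to contribute on this point either.
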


In spite of the absence of D-finite recurrences, we can still use the equations
above to determine the first few terms of the cogrowth series. The resulting
algorithm to compute the first $n$ terms of the series requires time and memory
that are exponential in $n$. The coefficient of $z^n$ is a Laurent polynomial
whose degree is exponential in $n$ and this exponential growth becomes worse as
$\max\{N/M, M/N\}$ becomes larger. In spite of this, iteration of these
equations to obtain the cogrowth series is exponentially faster than more naive
methods based on say a simple backtracking exploration of the Cayley graph, or
iteration of the corresponding adjacency matrix.

The time and memory requirements can be further improved since we are primarily
interested in the constant term; this means that we do not need to keep high
powers of $q$. More precisely if we wish to compute the series to $O(z^n)$, then
we only need to retain those powers of $q$ that will contribute to
$[q^0z^n]G(z;q)$. We must compute the coefficients of $z^k$ for $k \leq n/2$
exactly, but we can ``trim'' subsequent coefficients --- the degree of
$z^{n/2+k}$ needs only be that of $z^{n/2+k}$. 

Simple \texttt{c++} code using \texttt{cln}\footnote{An open source \texttt{c++}
library for computations with large integers. At time of writing it is
available from \texttt{http://www.ginac.de/CLN/} }
running on a moderate desktop allowed us to generate about the first 50 terms
of $[q^0]G(z;q)$ for $\mathrm{BS}(1,5)$ while over 300 terms for $\mathrm{BS}(4,5)$
were obtained. The series lengths for the other (with $N<M \leq 5$) 
ranged between these extremes. 
We have estimated the growth
rate of trivial words using differential approximants --- see Table~\ref{tab mu
NM}. Again like the $N=1$ case, we find the series to be very badly behaved
(except when $N = M$) and we have only obtained quite rough estimates. 

\begin{table}
\begin{center}
 \begin{tabular}{|l||c|c|c|c|c|}
\hline
 \backslashbox{$N$}{$M$} & 1 & 2 & 3 & 4 &5 \\
\hline\hline
1 & 4 & 4 & 4 & 4 & 4 \\ 
\hline
2 & $\circ$ & 3.792765039 & 3.724$\star$ & 3.701$\star$ &3.676$\star$ \\
\hline
3 & $\circ$ & $\circ$ & 3.647639445  & 3.604$\star$& 3.585$\star$\\
\hline
4 & $\circ$ & $\circ$ & $\circ$ & 3.569497357 & 3.538$\star$\\
\hline
5 & $\circ$ & $\circ$ & $\circ$ & $\circ$ & 3.525816111\\
\hline
 \end{tabular}
\end{center}
\caption{Exact and estimated growth rates of trivial words for Baumslag-Solitar
groups~$\mathrm{BS}(N,M)$. The estimated growth rates are denoted with a $\star$ and
we expect that the error lies in the last decimal place --- due to the
difficulty of obtaining estimates, they should be considered to be quite
rough.}
\label{tab mu NM}
\end{table}

\subsection{The limit as $N,M \to \infty$}
Beautiful work of Luc Guyot and  Yves Stalder \cite{\GStald} demonstrates that in the limit as
$N,M \to \infty$ the (marked) group $\mathrm{BS}(N,M)$ becomes the free group on 2
generators. We note that we can observe this free group behaviour in the
functional equations we have obtained.

In particular as $N,M \to \infty$, the operators $\Phi_{N,N}, \Phi_{M,M},
\Phi_{N,M}$ and $\Phi_{M,N}$ become the constant-term operators. So
in this limit the equations for $K$ and $L$ from Proposition~\ref{prop GLK eqn}
become
\begin{align}
 L &=  1 + z(q+\bar{q})L + z^2 L\cdot \left[ L_0 + K_0 \right] - z^2 K_0 L_0,
\nn
 K & = 1 + z(q+\bar{q})K + z^2 K\cdot\left[ K_0 + L_0 \right] - z^2 K_0 L_0,
\end{align}
where $K_0(z) = [q^0]K(z;q)$ and $L_0(z) = [q^0]L(z;q)$. Clearly $K(z;q) =
L(z;q)$ and so with a little rearranging
\begin{align}
  L(z;q) &= \frac{1-z^2 L_0(z)^2}{1-z(q+\bar{q}) - 2z^2L_0(z)}
  = \frac{1-z^2 L_0^2}{1-2z^2L_0} 
  \sum_{n \geq 0} \left( \frac{z(q+\bar{q})}{1-2z^2L_0} \right)^n.
\end{align}
Taking the constant term of both sides then gives
\begin{align}
  L_0 &= \frac{1-z^2 L_0^2}{1-2z^2L_0} 
  \sum_{n \geq 0} \binom{2n}{n} \left( \frac{z}{1-2z^2L_0} \right)^{2n} 
  = \frac{1-z^2 L_0^2}{1-2z^2L_0}
  \left[1-4\left(\frac{z}{1-2z^2L_0}\right)^2\right]^{-1/2}
\end{align}
Simplifying this last expression further gives
$(3z^2L0^2-L0+1)(z^2L0^2-L0-1)=0$. The only positive term power series solution
of this gives $L_0$ and a similar exercise gives $[q^0]G(z;q)$:
\begin{align}
  L_0 &= \frac{1-\sqrt{1-12z^2}}{6z^2} & 
  [q^0] G &= \frac{3}{1+2\sqrt{1-12z^2}}
\end{align}
The expression for $[q^0]G$ is the number of trivial words in the free group on
2 generators.

\section{Analysis of random sampling data}
\label{sec:analysis}
\subsection{Preliminaries}
Using our multiple Markov chain Monte Carlo algorithm we have sampled trivial
words from the following groups:
\begin{itemize}
 \item Baumslag-Solitar groups $\mathrm{BS}(N,M)$ with 
$$ (N,M)  =    (1,1), (1,2), (1,3),
(2,2), (2,3),   (3,3), (3,5).$$
\item  The basilica group has presentation
\begin{equation}
G = \langle a,b,\,|\,\hbox{$ \left[ a^n,[a^n,b^n]\right]$ and 
                                        $ \left[ b^n,[b^n,a^{2n}] \right]$}
                                           ,\,\hbox{$n$ a power of $2$} \rangle
\end{equation}
and embeds in the finitely presented group \cite{Grigorchuk02}
\begin{equation}
\W{G} = \langle a,t \,|\, a^{t^2}=a^2,\,\left[ \left[ [a,t^{-1}] ,a\right],a\right] = 1 \rangle .
\end{equation}
The groups $G$ and $\W{G}$ are both amenable   \cite{\BartV}.  

We examined two presentations of $\W{G}$:  The first is obtained from the above
by putting $b=[a,t^{-1}]$, and the second by putting $b=a^t$.  Simplification gives
the representations
\begin{align}
\W{G} &=  \langle a,b,t \,|\, \hbox{$[a,t^{-1}]=b$
                                                       ,\,$a^{t^2}=aa$
                                                       ,\,$\left[ [b,a],a \right]=1 $}\rangle , 
\label{eqnA4.4} \\
\W{G} &=  \langle a,b,t \,|\, \hbox{$a^t=b$
                                                       ,\,$b^t=a^2$
                                                       ,\,$b^{-1}aba^{-1}b^{-1}a^{-1}ba=1 $}\rangle . 
\label{eqnA4.5}
\end{align}
\item Other groups for which the cogrowth series is known:
\begin{align}\label{presKouksov}
 K_1 &= \langle a,b \,|\, a^2=b^3=1 \rangle, \nn
K_2 & =  \langle a,b \,|\, a^3=b^3=1 \rangle, \nn
K_3 & =  \langle a,b,c \,|\, a^2=b^2=c^2=1 \rangle.
\end{align}

 \item Thompson's group $F$ with the following 3 presentations
\begin{align}
  \langle a,b &\,|\, [a\bbar,\abar b a], [a\bbar,a^{-2} b a^2] \rangle,  \label{eqnF1}\\
  \langle a,b,c,d &\,|\, c=\abar b a, d=\abar c a, [a\bbar,c], [a\bbar, d]
\rangle, \label{eqnF2}\\
  \langle a,b,c,d,e & \,|\, c=\abar b a, d=\abar c a, e=a\bbar, [e,c], [e,d]
\rangle. \label{eqnF3}
\end{align}
Note that the generators $a,b,c,d$ above are often called $x_0,x_1,x_2,x_3$
respectively in Thompson's group literature. We have use some simple {\em Tietze
transformations} (see \cite{\LS} p. 89) to obtain the second and third
presentations from the first (standard) finite  presentation of $F$.
\end{itemize}
The exact solutions for $\mathrm{BS}(1,1)\cong \mathbb Z^2, \mathrm{BS}(2,2)$ and $\mathrm{BS}(3,3)$ are
described above, and for the other Baumslag-Solitar groups we have computed
series expansions. For the last three groups, the cogrowth series  were found by
Kouksov \cite{\Kouksov} and are (respectively)
\begin{align}
C(t) &= \frac{(1+t)\left([0,-1,1,-8,3,-9] +(2-t+6t^2)\sqrt{
[1,-2,1,-6,-8,-18,9,-54,81]}\right)}
{2(1-3t)(1+3t^2)(1+3t+3t^2)(1-t+3t^2)}\nn
%
%
C(t) &= \frac{(1+t)(-t+\sqrt{1-2t-t^2-6t^3+9t^4})}{(1-3t)(1+2t+3t^2)}\nn
%
C(t) &= \frac{-1-5t^2+3\sqrt{1-22t^2+25t^4}}{2(1-25t^2)}
\end{align}
where $[c_0,c_1,\dots,c_n] = c_0+c_1t+\cdots+c_nt^n$. 

In each case we have obtained estimates of the mean length of freely reduced
words as a function of $\beta$. More precisely, for each group we estimated
\begin{align}
  \Ex( n^k )(\beta)
&= \frac{\sum_\omega |w|^k (|w|+1)^{1+\alpha} \beta^{|w|}}
  {\sum_\omega (|w|+1)^{1+\alpha} \beta^{|w|}}
\label{eqn mlk}
\end{align}
for $k=\pm 1, \pm 2$ and a range of different $\alpha$ values and where the sum
is over all non-empty freely reduced trivial words. These expectations are
dependent on $\alpha$, but one can use Equation~\Ref{eqn rat est} to form
$\alpha$-independent estimates of the canonical expectations. Given a knowledge
of the cogrowth series we can quickly compute these same means to any desired
precision, since we can also write
\begin{align}
  \Ex( n^k )(\beta) &= \sum_{n \geq 0} 
\frac{n^k (n+1)^{1+\alpha} p_n \beta^n}{(n+1)^{1+\alpha} p_n \beta^n}
\end{align}
where $p_n$ is the number of freely reduced words of length $n$. Note that as
$\alpha$ is increased, the samples are biased towards longer words. This
expression is convergent for $\beta$ below the reciprocal of the cogrowth (being
the critical point of the associated generating function) and divergent above
it. The convergence \emph{at} the critical point depends on the precise details
of the asymptotics of $p_n$ and will be effected by $\alpha$. This then points
to a simple way to test for amenability.

\begin{prop}
 If the mean length of sampled words from a group on $k$ generators is
finite for $\beta$ slightly above $\beta_c=(2k-1)^{-1}$ then the group is
not amenable.
\end{prop}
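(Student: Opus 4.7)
The plan is to argue directly from the radius of convergence of the cogrowth series, combined with the Grigorchuk--Cohen characterisation of amenability already cited in the introduction: a finitely generated group on $k$ generators is amenable if and only if its cogrowth $\lambda$ equals $2k-1$. In particular, if $\lambda<2k-1$ the group is not amenable, so it suffices to show that finiteness of $\Ex(|w|)(\beta_0)$ at some $\beta_0>(2k-1)^{-1}$ forces $\lambda<2k-1$.

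The first step is to relate the mean length to the cogrowth series. Grouping trivial words by length, we can rewrite
\begin{align*}
\Ex(|w|)(\beta) &= \frac{\sum_{n\geq 0} n(n+1)^{1+\alpha} c_n\beta^{n}}{\sum_{n\geq 0}(n+1)^{1+\alpha} c_n\beta^{n}},
\end{align*}
where $c_n$ is the number of freely reduced trivial words of length $n$. By definition of the cogrowth, $\limsup c_n^{1/n}=\lambda$, so the generating function $\sum c_n\beta^n$ has radius of convergence exactly $1/\lambda$. Multiplying the coefficients by the polynomial factor $(n+1)^{1+\alpha}$ (and, for the numerator, by $n$) does not change this radius. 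Hence both the numerator and denominator above converge for $\beta<1/\lambda$ and diverge for $\beta>1/\lambda$.

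The second step is the contrapositive argument. Suppose $\Ex(|w|)(\beta_0)$ is finite at some $\beta_0>(2k-1)^{-1}$. Then in particular the denominator $\sum(n+1)^{1+\alpha}c_n\beta_0^{n}$ must converge (otherwise the ratio is either undefined or, via Boltzmann normalisation, the distribution has infinite mean), which forces $\beta_0\leq 1/\lambda$, i.e.\ $\lambda\leq 1/\beta_0<2k-1$. By Grigorchuk--Cohen this implies $G$ is not amenable.

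The main (and only) subtlety will be handling the boundary case $\beta_0=1/\lambda$ cleanly: one needs to observe that even if $c_n\beta_0^n$ is summable at the critical point, multiplying by the extra factors $n(n+1)^{1+\alpha}$ makes convergence strictly harder, so any finiteness of $\Ex(|w|)$ at a given $\beta_0$ still gives $\beta_0\leq 1/\lambda$, and combined with $\beta_0>(2k-1)^{-1}$ yields the strict inequality $\lambda<2k-1$. Everything else is a direct invocation of the already-established cogrowth/amenability dictionary.
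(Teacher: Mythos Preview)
Your argument is correct and matches the paper's own reasoning. The paper does not supply a formal proof of this proposition; instead it relies on the discussion in Section~2.4 (and repeated in Section~4.1) that $\Ex(|w|)$ is a log-derivative of the (weighted) cogrowth series, finite for $\beta$ below the reciprocal of the cogrowth and divergent above it, together with the Grigorchuk--Cohen criterion --- precisely the ingredients you invoke. Your write-up simply makes these steps explicit, and your handling of the boundary case $\beta_0=1/\lambda$ is more careful than anything in the paper (though, as you note, it is immaterial: $\beta_0\leq 1/\lambda$ combined with $\beta_0>(2k-1)^{-1}$ already gives $\lambda<2k-1$).
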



Note that the amenability or non-amenability of Thompson's group $F$ is an open question, and one that has received an intense amount of interest and study.

\subsection{Amenable groups}
We studied the groups $\mathbb{Z}^2 \cong \mathrm{BS}(1,1), \mathrm{BS}(1,2)$ and $\mathrm{BS}(1,3)$. The
cogrowth series for $\mathbb{Z}^2$ is known exactly, while we relied on our
series expansions to compute statistics for the other two groups ---
Figure~\ref{fig:bs111213 plots} shows the plots of the mean length as a
function of $\beta$.

\begin{figure}[h]
  \centering
  \subfloat[$\mathrm{BS}(1,1)$ sampled with $\alpha = 1$.]{
    \includegraphics[width=0.5\textwidth]{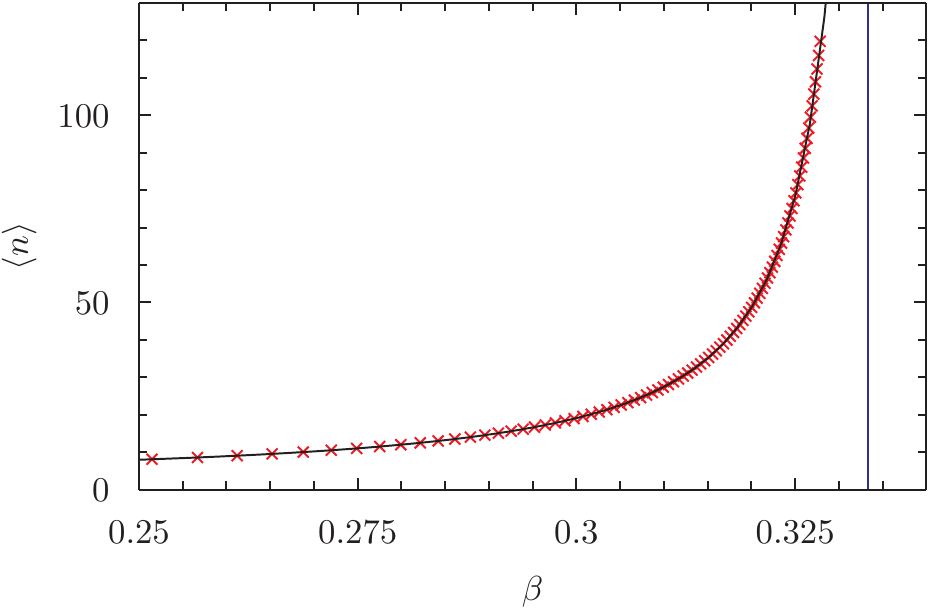}}
  \subfloat[$\mathrm{BS}(1,2)$ sampled with $\alpha = 1$.]{
    \includegraphics[width=0.5\textwidth]{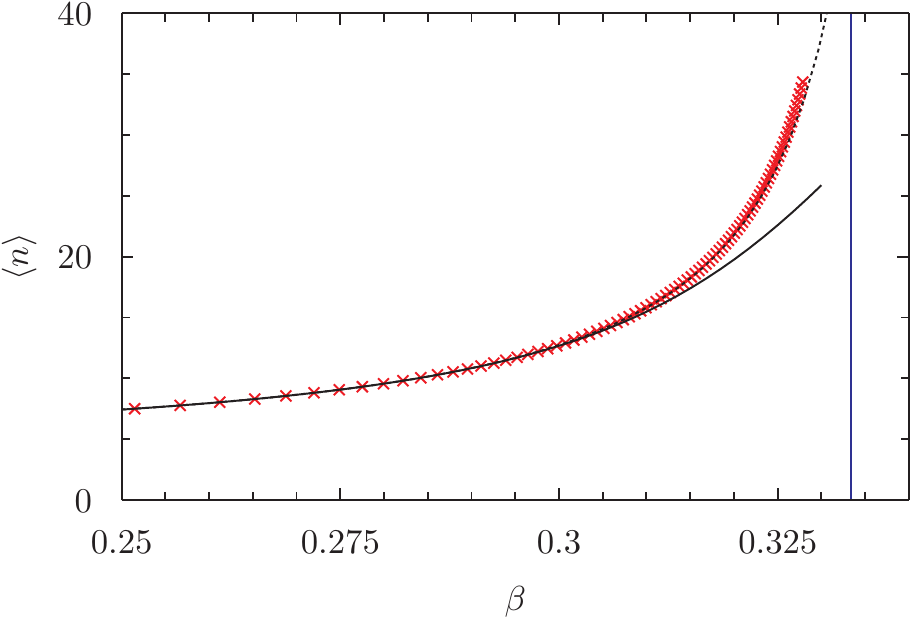}}\\
  \subfloat[$\mathrm{BS}(1,3)$ sampled with $\alpha = 2$.]{
    \includegraphics[width=0.5\textwidth]{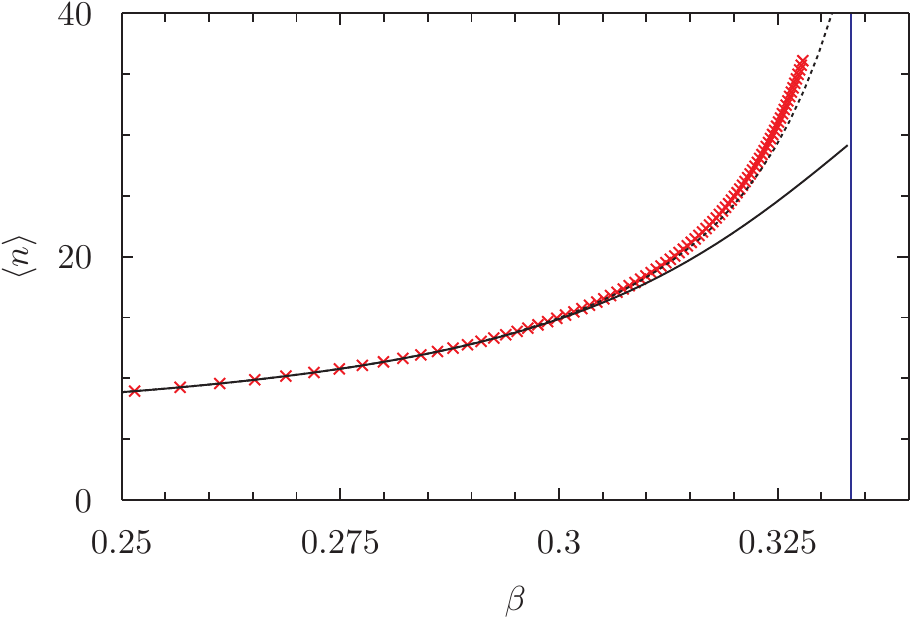}}
  \caption{Mean length of freely reduced trivial words in Baumslag-Solitar
groups $\mathrm{BS}(1,1), \mathrm{BS}(1,2)$ and $\mathrm{BS}(1,3)$ at different values of~$\beta$ --- see
equation~\Ref{eqn mlk} with $k=1$ and $\alpha$ as indicated. The sampled points
are indicated with impulses, while the exact values are given by the black line.
There is excellent agreement in the case of $\mathrm{BS}(1,1)$, but a systematic error
for $\mathrm{BS}(1,2)$ and $\mathrm{BS}(1,3)$ coming from the modest length of
the exact series. The dotted lines in these two cases indicate mean length data
generated from longer but approximate series.}
  \label{fig:bs111213 plots}
\end{figure}

\begin{figure}[h]
  \centering
  \subfloat[$\mathrm{BS}(1,1)$ sampled with $\alpha = 0,1,2,3$.]{
    \includegraphics[width=0.5\textwidth]{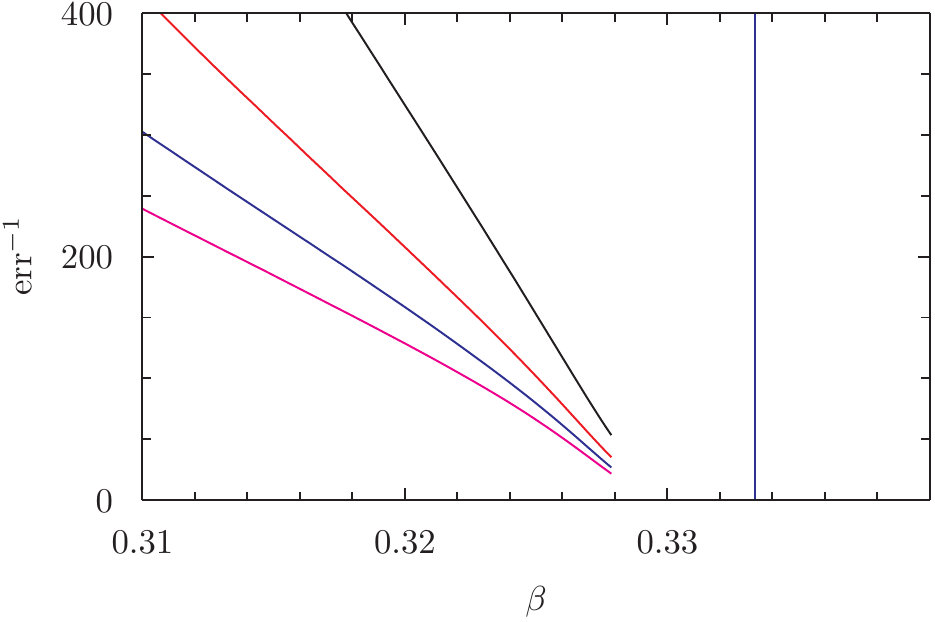}}
  \subfloat[$\mathrm{BS}(1,2)$ sampled with $\alpha = 0,1,2,3$.]{
    \includegraphics[width=0.5\textwidth]{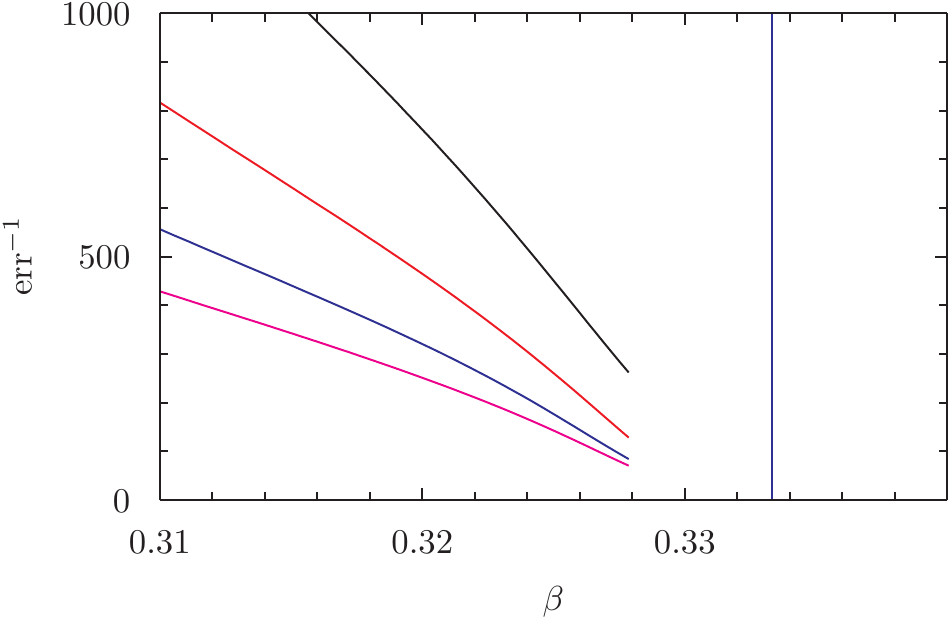}}\\
  \subfloat[$\mathrm{BS}(1,3)$ sampled with $\alpha = 0,1,2,3$.]{
    \includegraphics[width=0.5\textwidth]{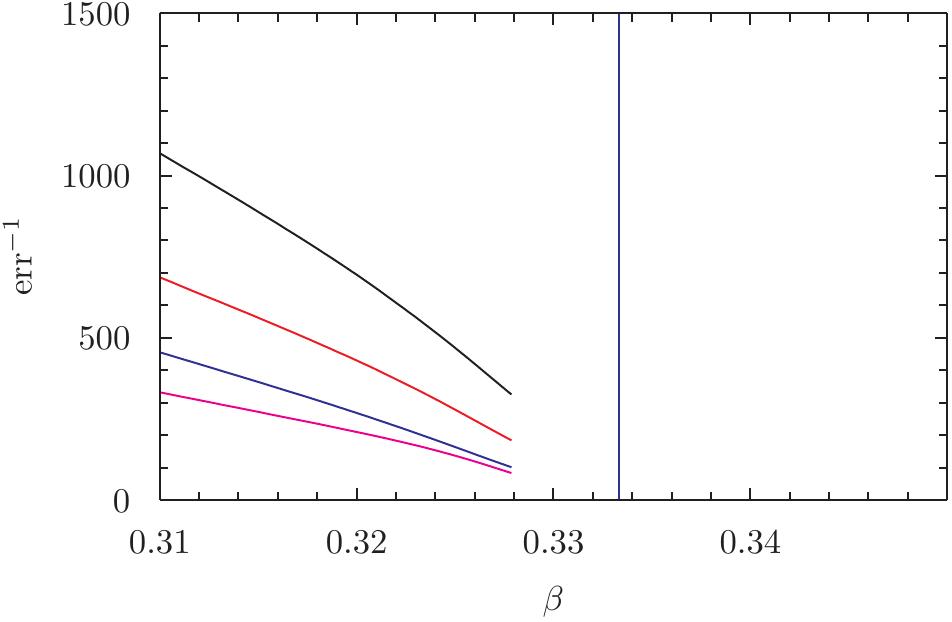}}
  \caption{Plots of the reciprocal of estimated standard error in the mean
length vs beta for $\alpha=0,1,2,3$ anti-clockwise from the top. We expect
that as $\beta$ approaches its critical value, that the standard error will
diverge. We see that if we extrapolate the curves then they cross the $x$-axis
at $\beta = 0.330\pm0.002, 0.332\pm0.002$ and $\beta=0.332\pm0.002$
respectively --- thus these extrapolations give good estimates of the critical
value of $\beta$.}
  \label{fig:bs111213 err plots}
\end{figure}

In the case of $\mathrm{BS}(1,1)\cong \mathbb{Z}^2$ we see excellent agreement between the numerical
estimates generated by our algorithm and the mean length computed from the exact
cogrowth series. For $\mathrm{BS}(1,2)$ and $\mathrm{BS}(1,3)$ we see good agreement for low
$\beta$ between our numerical data and mean length computed from the exact
cogrowth series. However at larger values of $\beta$ it appears that the
cogrowth series systematically underestimates the mean length, compared
to the numerical Monte Carlo data. This is, in fact,
due to the modest length of the cogrowth series used to compute mean lengths.
For $\mathrm{BS}(1,2)$ and $\mathrm{BS}(1,3)$ we were only able to obtain series of length 60 and
56 respectively due to memory constraints. Given longer series we expect much
better agreement.

One can, for example, compute longer ``approximate'' cogrowth series by
ignoring small terms. When iterating the functional equations given in
Proposition~\ref{prop GLK eqn} one can form reasonable approximations by
discarding coefficients $g_{n,k}$ which are small compared to nearby
coefficients.\footnote{Rather than iterating the equations for $G(z;q)$ and
then transforming the result to get an approximate cogrowth series, we found
that our approximation procedure worked best if we iterated the slightly more
complicated equations for the cogrowth series directly --- see text following
Proposition~\ref{prop GLK eqn} for a description of those equations.} %
More precisely we found that if we discard $g_{n,k}$ when $2^{12}
\cdot g_{n,k} < \sum_k g_{n,k}$, then we obtain good approximations of the
cogrowth series. This means that only the large central coefficients are kept
and far less memory used. This made it feasible to approximate the cogrowth
series out to around 200 or 300 terms. Of course, the results of these
approximation should only be considered a rough guide as we have not bounded the
size of any resulting errors. That being said, we see very good agreement
between these approximations and our numerical data.

As noted above, we had great difficulty fitting the series data for $\mathrm{BS}(1,2)$ and 
$\mathrm{BS}(1,3)$. We believe that this is due to the presence of complicated confluent
corrections (likely logarithmic terms). Similar corrections also appear to be
present in the mean-length data for these groups and we were unable to find
convincing or consistent fits to any reasonable functional forms. We did,
however, find that the estimated standard error was a good indicator of the
location of the singularity: The standard error will diverge as $\beta$
approaches the critical value of $\nicefrac{1}{3}$. We found that linear or
quadratic least squares fits of the reciprocal of the error, and
finding their $x$-intercept gave consistent, though perhaps slightly low,
estimates of the location of the singularity. See Figure~\ref{fig:bs111213 err
plots}. The extrapolations give estimates $\beta = 0.330\pm0.0002,
0.332\pm0.002$ and $\beta=0.332\pm0.002$ for $\mathrm{BS}(1,1), \mathrm{BS}(1,2)$ and $\mathrm{BS}(1,3)$
respectively.

Error bars above were determined by estimating a systematic error in our data.
The systematic error was determined by considering the spread of estimates due
to our choices of the parameter $\alpha$, the number of data points in the fits,
and the chosen functional form for extrapolating the data. We believe that our
results give a good indication of quality of the  estimates, though we are
reluctant to express them as firm confidence intervals.  The same general
approach to the data for the other groups are followed below. 

The HNN-extension of the basilica group were similarly submitted to Monte
Carlo simulation by using the representations \Ref{eqnA4.4} and \Ref{eqnA4.5}.
The canonical expected length of the words,
$\langle |w|\rangle$, were computed using the ratio
estimator \Ref{eqn rat est}, and turned out to be remarkably insensitive to 
the parameter $\beta$ (see Figure \ref{fig:figureBasilica}).  
This made this group more challenging from a numerical
perspective than the Baumslag-Solitar groups discussed above.
Putting $\alpha=5$ finally gave acceptable results:  The sample average
length show a divergence close to the critical point (since this group
is known to be amenable, this is expected to be at $\beta=0.2$).
As in the case of the Baumslag-Solitar groups, the critical value of 
$\beta$ was determined by extrapolating the reciprocal of the error.
Extrapolating the curve corresponding to representation \Ref{eqnA4.4}
gave $\beta_c=0.217$ and for representation \Ref{eqnA4.5},
$\beta_c=0.204$.  Taking the average and using the absolute difference
as a confidence interval gives the estimate $\beta_c = 0.21 \pm 0.01$ to
two digits accuracy.

\begin{figure}[h]
  \centering
  \subfloat[Mean length with $\alpha=5$.]{
   \includegraphics[width=0.5\textwidth]{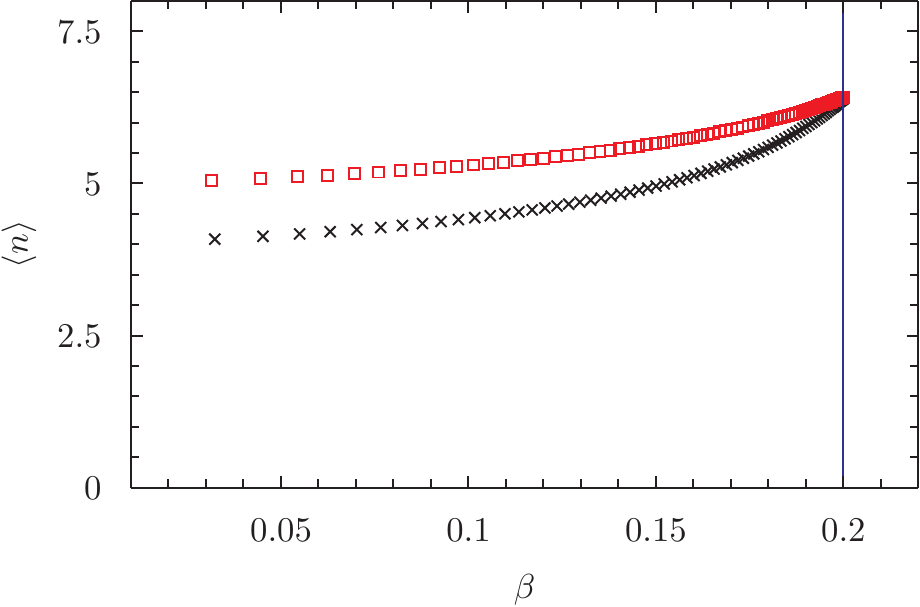}}
 \subfloat[$\hbox{err}^{-1}$ with $\alpha=5$.]{
   \includegraphics[width=0.5\textwidth]{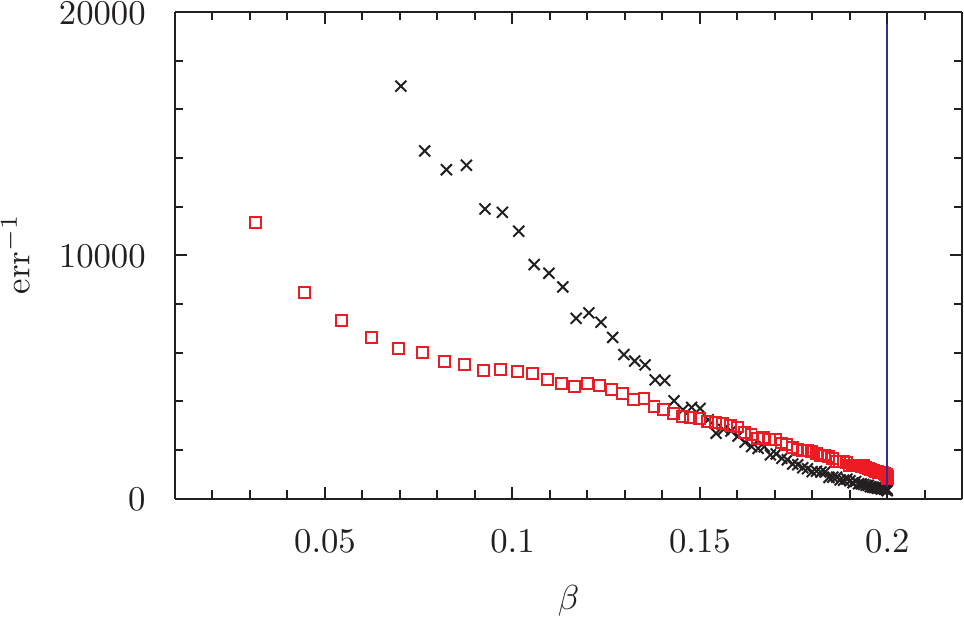}}
 \caption{Numerical data on the HNN-extension of the basilica group.  Data points
indicated by $\square$ corresponds to the representation in equation \Ref{eqnA4.4}
and by $\times$ to the representation in equation \Ref{eqnA4.5}.  In both simulations
$\alpha=5$.  On the left is a plot of the canonical expected length 
$\langle n \rangle$.  These expected lengths are only weakly dependent on
$\beta$.  On the right is the reciprocal error bar on our data.
This demonstrates that the error diverges as $\beta\nearrow 0.20$, consistent
with the fact that these this group is amenable.}
 \label{fig:figureBasilica}
\end{figure}

\subsection{Non-amenable groups}
The groups $\mathrm{BS}(N,M)$ with $(N,M) =$ $ (2,2), (2,3),$ $(3,3), (3,5)$ and 
the groups $K_1,K_2,K_3$ contain a non-abelian free subgroup and so 
are non-amenable.  In the case of the groups $K_1$ and $K_2$ the free 
subgroups are $F((ab),(ab^{-1}))$, and for $K_3$ the free subgroup is 
$F((ab),(ac))$. 

As noted above, the exact cogrowth series is known exactly for Kouksov's
examples and $\mathrm{BS}(2,2), \mathrm{BS}(3,3)$, so we were able to compute the mean length
curves exactly --- see Figures~\ref{fig:kuksov plots} and~\ref{fig:bs2233
plots}. As above, we have estimated the location of the dominant
singularities for all of these groups --- see Figures~\ref{fig:kuksov err plots}
and~\ref{fig:bs2233 err plots}.

Unfortunately we have been unable to solve $\mathrm{BS}(2,3)$ and $\mathrm{BS}(3,5)$, but we used
the recurrences of the previous section to compute the first 100 and 120 terms
(respectively) of their cogrowth series. And as was the case for $\mathrm{BS}(1,2)$ and
$\mathrm{BS}(1,3)$ we also computed an approximation of the cogrowth series
using the same method described above. These are plotted against our Monte
Carlo data in Figures~\ref{fig:bs23 plots} and~\ref{fig:bs2335 err plots}.

In all cases we see strong agreement between our numerical estimates and the
mean length curves computed from series or exact expressions. As was the
case with the amenable groups above, fitting the reciprocal of the
estimated standard error gives quite acceptable estimates of the location of
the dominant singularities and so the cogrowth.

\subsection{Thompson's group}

Finally we come to Thompson's group for which we examine three different
presentations as described above. Repeating the same analysis we used on
the previous groups we see no evidence of a singularity in the mean length at
the amenable values of $\beta$ --- see Figures~\ref{fig:thomp plots}
and~\ref{fig:thomp err plots}. Indeed our estimates of the location of the
dominant singularities are
\begin{align}
 \beta_c &= 0.395\pm0.005, 0.172\pm 0.002 \mbox{ and } 0.134\pm0.004 \mbox{
respectively}.
\end{align}
These give cogrowths of $2.53\pm0.03, 5.81\pm0.07$ and $7.4\pm0.2$, all of
which are well below the amenable values of 3,7 and 9. 

\section{Conclusions}
\label{sec:conclusion}
We have introduced a Markov chain on freely reduced trivial words of any given
finitely presented group. The transitions along the chain are defined in terms
of conjugations by generators and insertions of relations. These moves are
irreducible and satisfy a detailed balance condition; the limiting distribution
of the chain is therefore a stretched Boltzmann distribution over trivial
words. 

In order to validate the algorithm we have implemented it for a range of
finitely presented groups for which the cogrowth series is known exactly. We have
also added to this set of groups by finding recurrences for the cogrowth series
of all Baumslag-Solitar groups. Unfortunately, these recurrences do not have
simple closed-form solutions, but can be iterated to obtain far longer series
than can be found using brute-force methods. In the case of $\mathrm{BS}(N,N)$, the
recurrences simplify significantly and we are able to compute the cogrowth
exactly. For $N=1,\dots,10$ we have found differential equations satisfied by
the cogrowth series which can be used to generate the cogrowth series in
polynomial time.

We see excellent agreement between our mean-length estimates and those computed
exactly for several groups. As a further check on our simulations, two of the
authors independently coded the algorithm and compared the results. We can use
our data to estimate the location of the singularity in the generating function
of freely reduced trivial words. The location of this singularity is the
reciprocal of the cogrowth and so turns out to be an excellent way to predict
the amenability of groups. To test this, we used our algorithm on a range of
different amenable and non-amenable groups. In each case we found that our
numerical estimate of the cogrowth was completely consistent with the known
properties of the groups. In particular, where cogrowth is known exactly, our
numerics agreed. For each non-amenable group, the numerical ``signal'' was
robust --- no evidence of a singularity was seen at the amenable value.

Interestingly, we see 
no evidence that the mean length of
Thompson's group is divergent close to the amenable value; {\em  i.e.} for
2,4 and 5 generator presentations we see no evidence of a singularity at $\beta
= \nicefrac{1}{3}, \nicefrac{1}{7}$ or $\nicefrac{1}{9}$ (respectively). Indeed,
in each case, the mean length appears to be very smooth for $\beta$-values some
reasonable distance above these points. Varying $\alpha$ or examining other
statistics does not result in any substantial change with the result that values
of $\beta$ consistent with amenability are excluded from our estimated error
bars. Overall, our numerical data appears to suggest  that Thompson's
group $F$ is not amenable. However, the question of the amenability of this group have proven to be particularly subtle, so one way to interpret our data is
to say that if $F$ is indeed amenable then it is a highly atypical representative in its class.


As an additional note, we have applied our methods to a finitely generated,
but not finitely presented amenable group --- 
\begin{align}\label{ZwrZ}
\mathbb Z\wr \mathbb Z &= \langle a,b \,|\, [t^iat^{-i},t^jat^{-j}]  \ \forall i,j\in\mathbb Z\rangle.
\end{align}
In this case
the algorithm has to be modified slightly. One can no longer choose relations
uniformly at random, but instead we choose them from distribution $P(R)$
over the relations $[t^iat^{-i},t^jat^{-j}] $. As noted in section 2, this distribution must be 
positive and  one must have $P(R) = P(R^{-1})$. With these conditions 
the algorithm remains ergodic on the space of
trivial words and the stationary distribution is still a 
stretched Boltzmann distribution.
This leaves a great deal of freedom in choosing $P$, and our experiments
indicated that our results were quite independent of $P$ and are consistent
with the amenability of the group.

\section*{Acknowledgements}
The authors thank Manuel Kauers for assistance with establishing the
differential equations described in Section~\ref{sec bsnn}. Similarly we thank
Tony Guttmann for discussions on the analysis of series data. Finally we would
like to thank Sean Cleary and Stu Whittington for many fruitful discussions.

This research was supported by the Australian Research Council (ARC), the
the
Natural Sciences and Engineering Research Council of Canada (NSERC), and the Perimeter Institute for Theoretical
Physics.  Research at Perimeter Institute is supported by the Government
of Canada through Industry Canada and by the Province of Ontario through
the Ministry of Economic Development and Innovation.

\begin{figure}[h]
  \centering
  \subfloat[$\langle a,b|a^2,b^3 \rangle$ sampled with $\alpha = 0$.]{
    \includegraphics[width=0.5\textwidth]{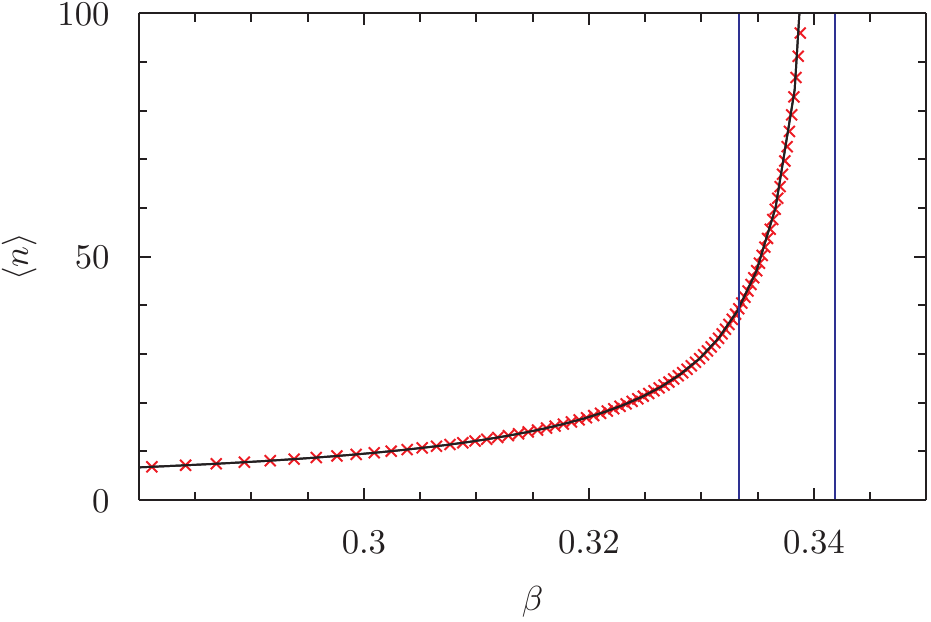}}
  \subfloat[$\langle a,b|a^3,b^3 \rangle$ sampled with $\alpha = 0$.]{
    \includegraphics[width=0.5\textwidth]{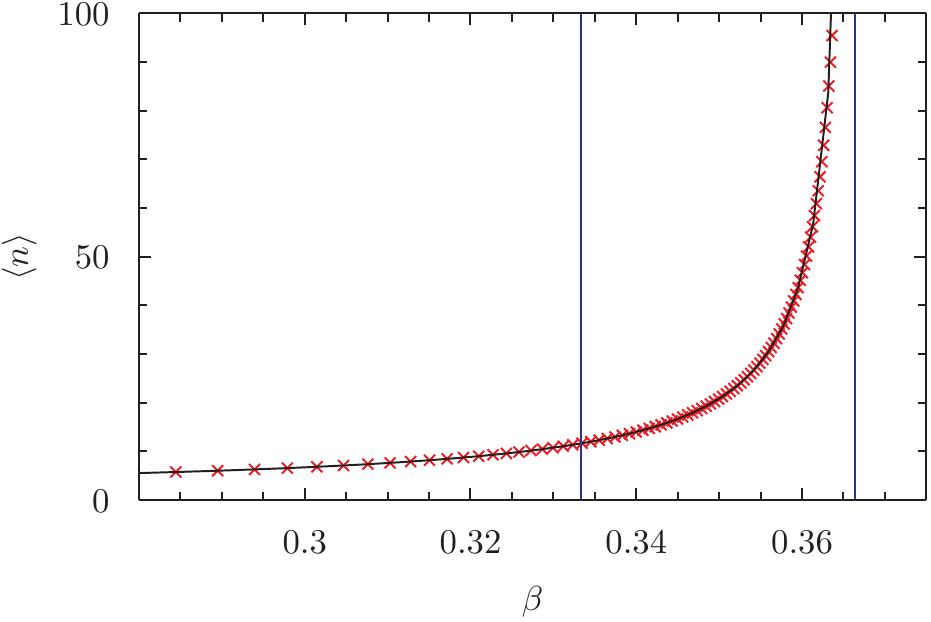}}\\
  \subfloat[$\langle a,b,c|a^2,b^2,c^2 \rangle$ sampled with $\alpha =1$.]{
    \includegraphics[width=0.5\textwidth]{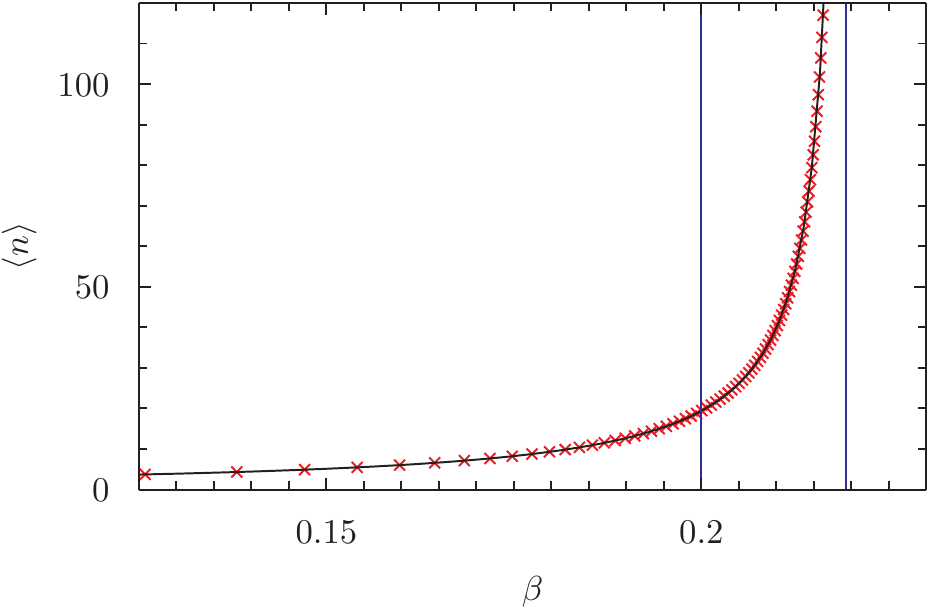}}
  \caption{Mean length of freely reduced trivial words of the indicated
groups. The sampled points are indicated with impulses, while the exact
values are given by the black line. We have used vertical lines to indicate
$\beta=\nicefrac{1}{3}, \nicefrac{1}{3}, \nicefrac{1}{5}$ (respectively)  and
also the reciprocal of the cogrowth where the statistic will diverge --- being
$0.3418821478, 0.3664068598$ and $0.2192752634$ respectively. There is excellent
agreement between the numerical and exact results, except possibly at the very
highest $\beta$ values.}
  \label{fig:kuksov plots}
\end{figure}

\begin{figure}[h]
  \centering
  \subfloat[$\langle a,b|a^2,b^3 \rangle$ sampled with $\alpha = 0,1,2,3$.]{
    \includegraphics[width=0.5\textwidth]{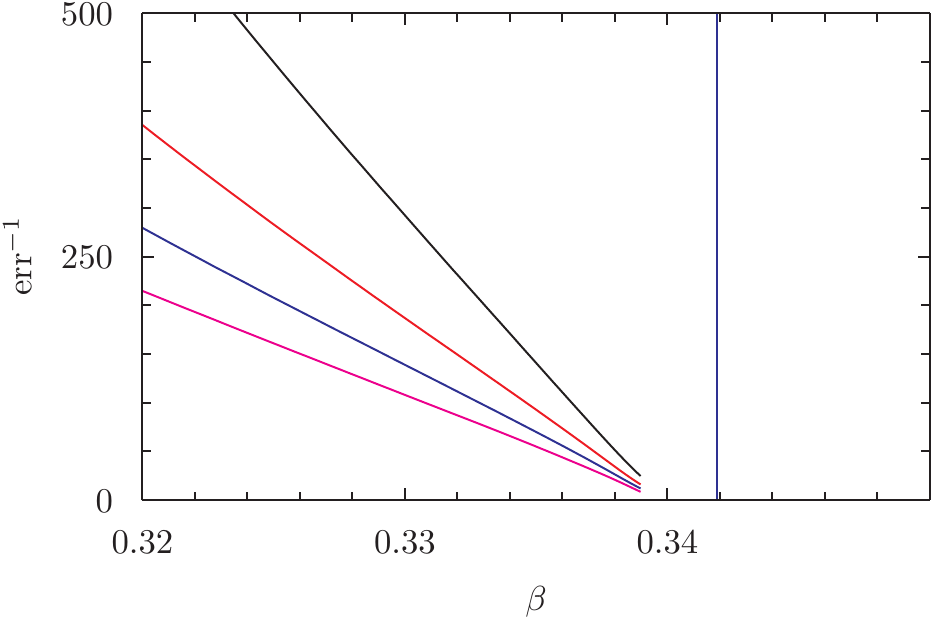}}
  \subfloat[$\langle a,b|a^3,b^3 \rangle$ sampled with $\alpha = 0,1,2,3$.]{
    \includegraphics[width=0.5\textwidth]{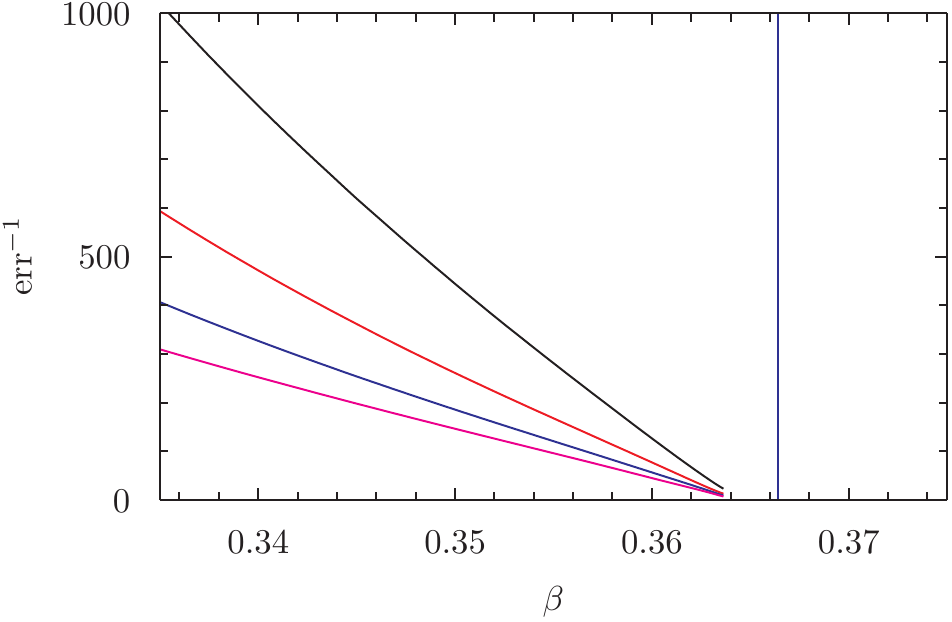}}\\
  \subfloat[$\langle a,b,c|a^2,b^2,c^2 \rangle$ sampled with $\alpha =0,1,2,3$.]{
    \includegraphics[width=0.5\textwidth]{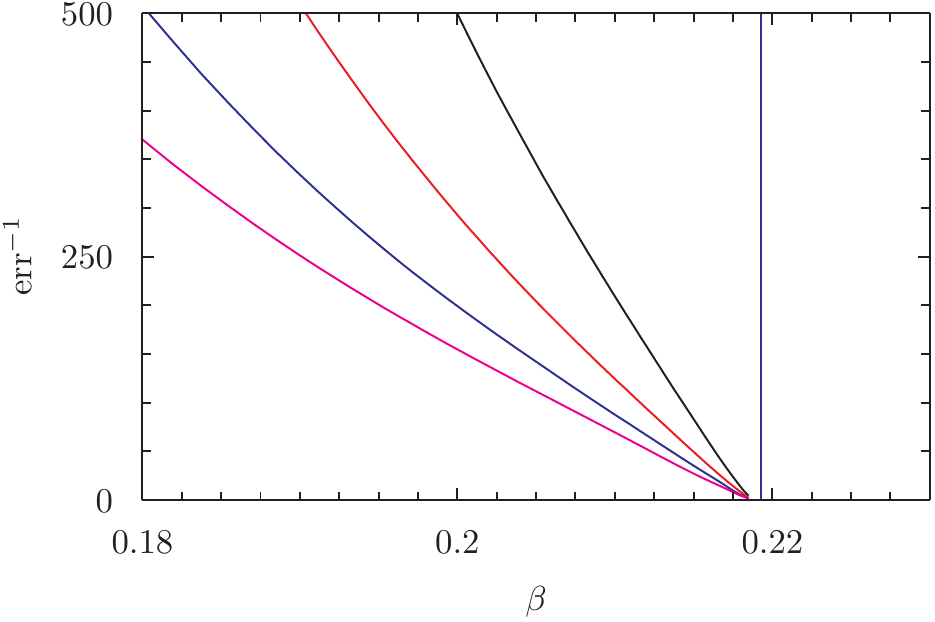}}
  \caption{The reciprocal of the estimated standard error vs $\beta$ for the
indicated groups with $\alpha=0,1,2,3$ (clockwise from top in each case).  We
see that the extrapolations of the curves intersect the $x$-axis very close, but
slightly short, of the indicated critical values of $\beta$ --- $0.3418821478,
0.3664068598$ and $0.2192752634$ respectively. Hence these give good, but
slightly low, estimates of the location of the singularities $\beta =
0.340\pm0.002, 0.365\pm0.002$ and $0.219\pm0.001$.}
  \label{fig:kuksov err plots}
\end{figure}

\begin{figure}[h]
  \centering
  \subfloat[$\mathrm{BS}(2,2)$ sampled with $\alpha = 1$.]{
    \includegraphics[width=0.5\textwidth]{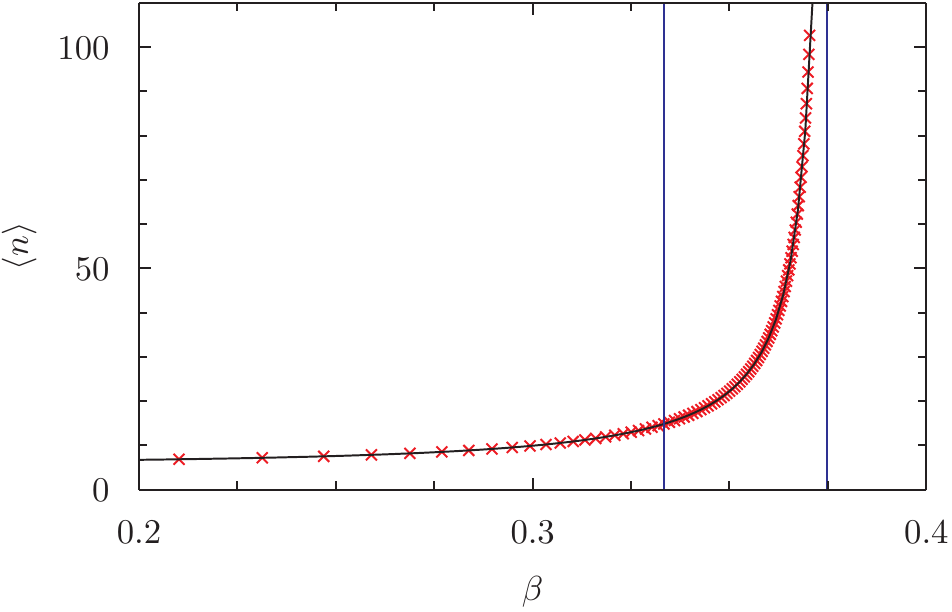}}
  \subfloat[$\mathrm{BS}(3,3)$ sampled with $\alpha = 1$.]{
    \includegraphics[width=0.5\textwidth]{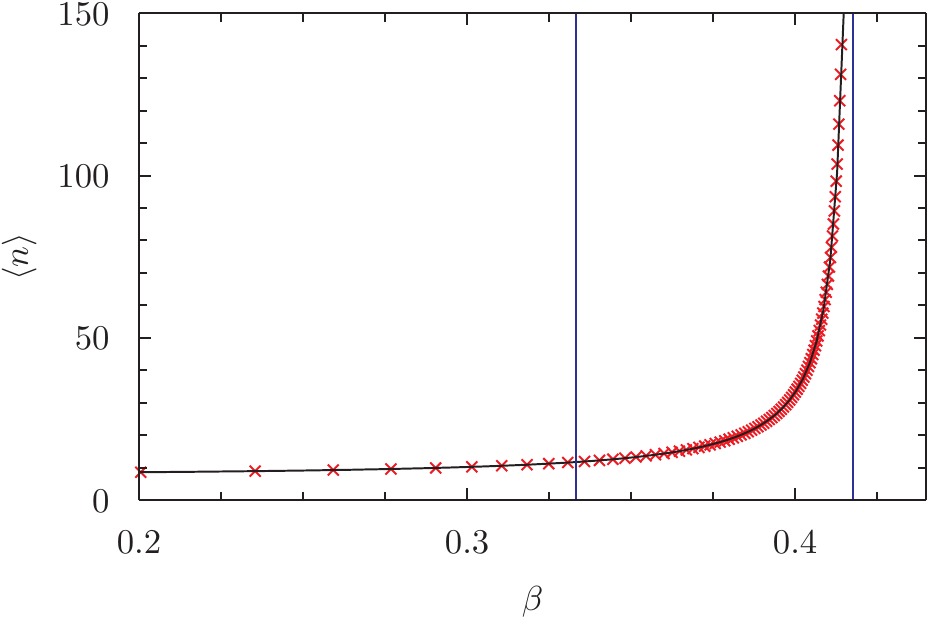}}
  \caption{Mean length of freely reduced trivial words in Baumslag-Solitar
groups $\mathrm{BS}(2,2)$ and $\mathrm{BS}(3,3)$ at different values of~$\beta$. The sampled
points are indicated with impulses, while the exact values are given by the
black line. We have used vertical lines to indicate $\beta=\nicefrac{1}{3}$ and
also the reciprocal of the cogrowth where the statistic will diverge --- being
$0.3747331572$ and $0.417525628$ respectively. We see excellent agreement
between our numerical data and the exact results, and our error bars are
smaller than the impulses.}
  \label{fig:bs2233 plots}
\end{figure}

\begin{figure}[h]
  \centering
  \subfloat[$\mathrm{BS}(2,2)$ sampled with $\alpha = 0,1,2,3$.]{
    \includegraphics[width=0.5\textwidth]{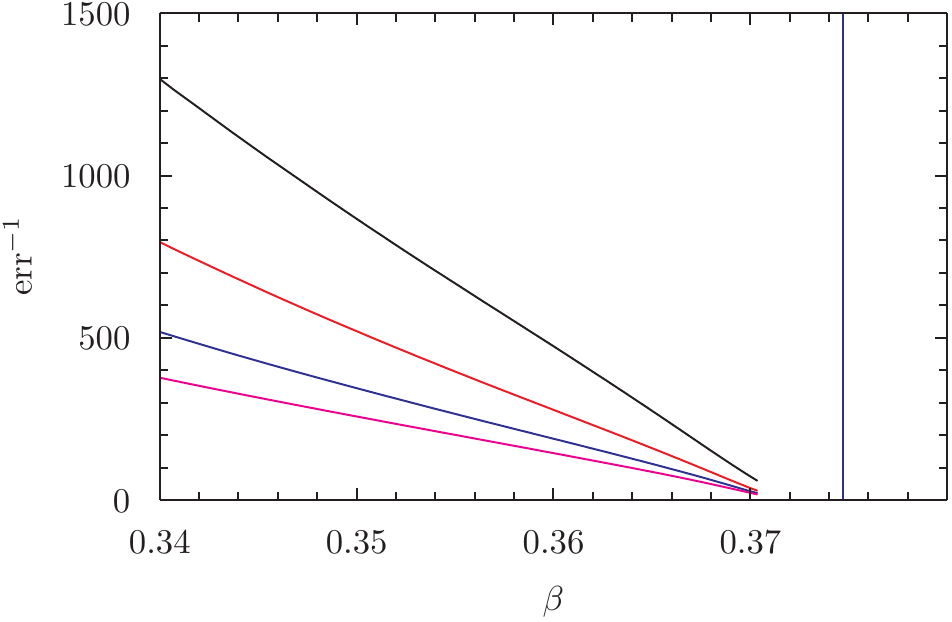}}
  \subfloat[$\mathrm{BS}(3,3)$ sampled with $\alpha = 0,1,2,3$.]{
    \includegraphics[width=0.5\textwidth]{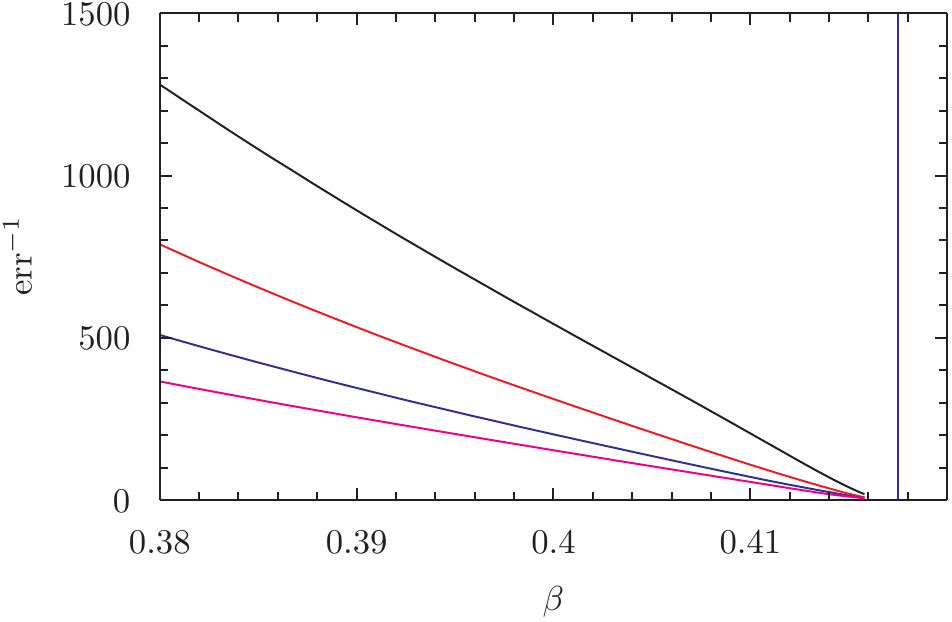}}
  \caption{The reciprocal of the estimated standard error of the mean length as
a function of $\beta$ for $\mathrm{BS}(2,2)$ and $\mathrm{BS}(3,3)$. In both plots we show 4
curves corresponding to simulations at $\alpha=0,1,2,3$ (anti-clockwise from
top) and denote the singular values --- $0.3747331572$ and $0.417525628$
respectively --- with vertical lines. Extrapolating the curves give estimates of
$\beta_c = 0.372\pm0.002$  and $0.416 \pm 0.001$ respectively.}
  \label{fig:bs2233 err plots}
\end{figure}

\begin{figure}[h]
  \centering
  \subfloat[$\mathrm{BS}(2,3)$ sampled with $\alpha=1$.] {
  \includegraphics[width=0.5\textwidth]{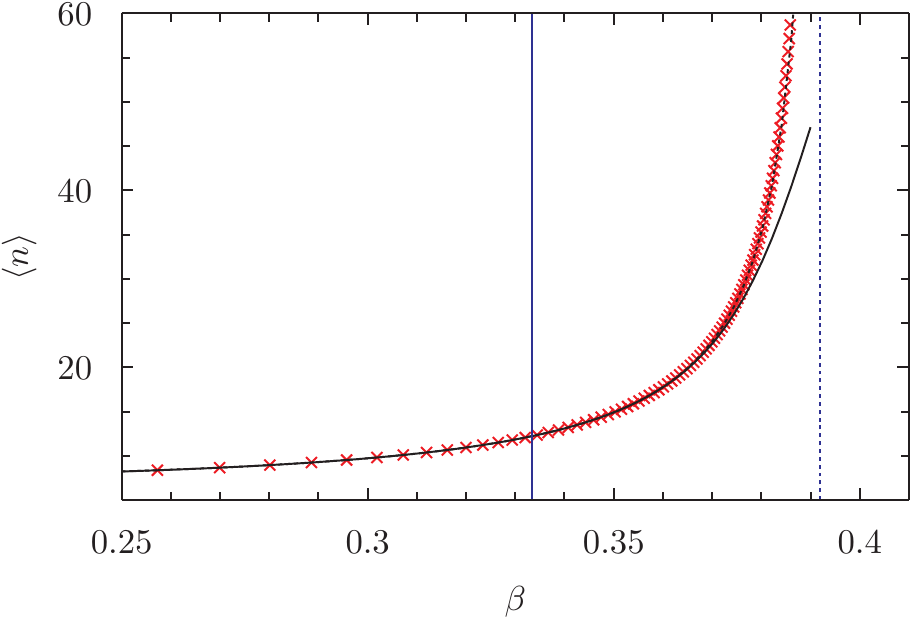} }
  \subfloat[$\mathrm{BS}(3,5)$ sampled with $\alpha=0$.] {
  \includegraphics[width=0.5\textwidth]{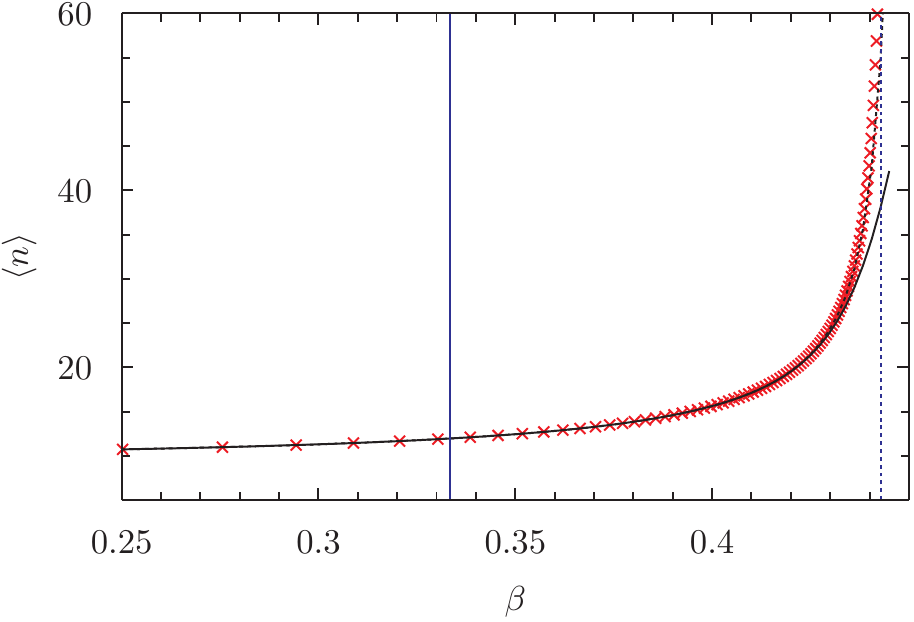} }
  \caption{The mean length of trivial words in $\mathrm{BS}(2,3)$ and $\mathrm{BS}(3,5)$ at
different values of $\beta$. We see very good for low and moderate values
of $\beta$ and by systematic errors for larger $\beta$.  Again, this error
arises from from the modest length of the exact series. The dotted curves in
these two cases indicate mean length data generated from longer but approximate
series, while the dotted vertical lines indicate the estimated critical value
of $\beta$ from series.}
  \label{fig:bs23 plots}
\end{figure}

\begin{figure}[h]
  \centering
  \subfloat[$\mathrm{BS}(2,3)$ sampled with $\alpha = 0,1,2,3$.]{
    \includegraphics[width=0.5\textwidth]{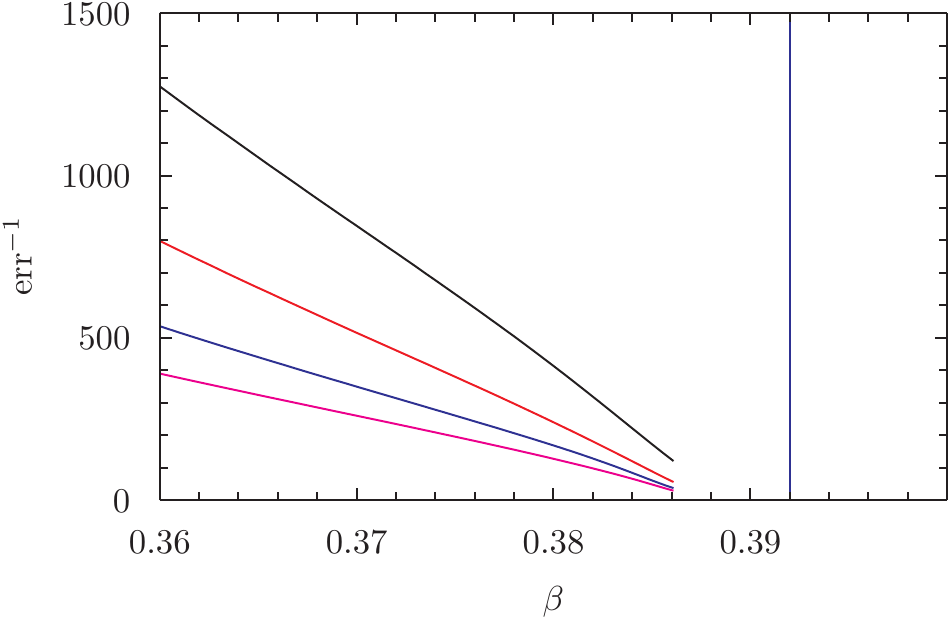}}
  \subfloat[$\mathrm{BS}(3,5)$ sampled with $\alpha = 0,1,2,3$.]{
    \includegraphics[width=0.5\textwidth]{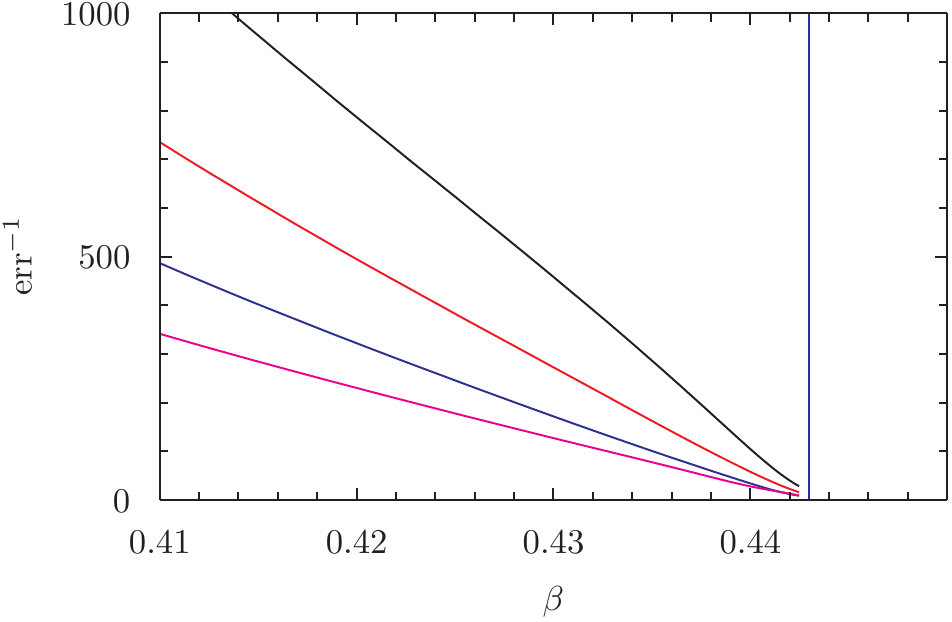}}
  \caption{The reciprocal of the estimated standard error of the mean length as
a function of $\beta$ for $\mathrm{BS}(2,3)$ and $\mathrm{BS}(3,5)$. In both plots we show 4
curves corresponding to simulations at $\alpha=0,1,2,3$ (anti-clockwise from
top). Extrapolating these curves we estimate $\beta_c = 0.388\pm0.02$ and
$0.444\pm0.002$ for $\mathrm{BS}(2,3)$ and $\mathrm{BS}(3,5)$ respectively. These are quite close
to the estimates from series of $0.393$ and $0.443$ (indicated with vertical
lines).}
  \label{fig:bs2335 err plots}
\end{figure}

\begin{figure}[h]
  \centering
  \subfloat[Standard presentation (\ref{eqnF1}) for $F$  sampled with $\alpha = 2$]{
    \includegraphics[width=0.5\textwidth]{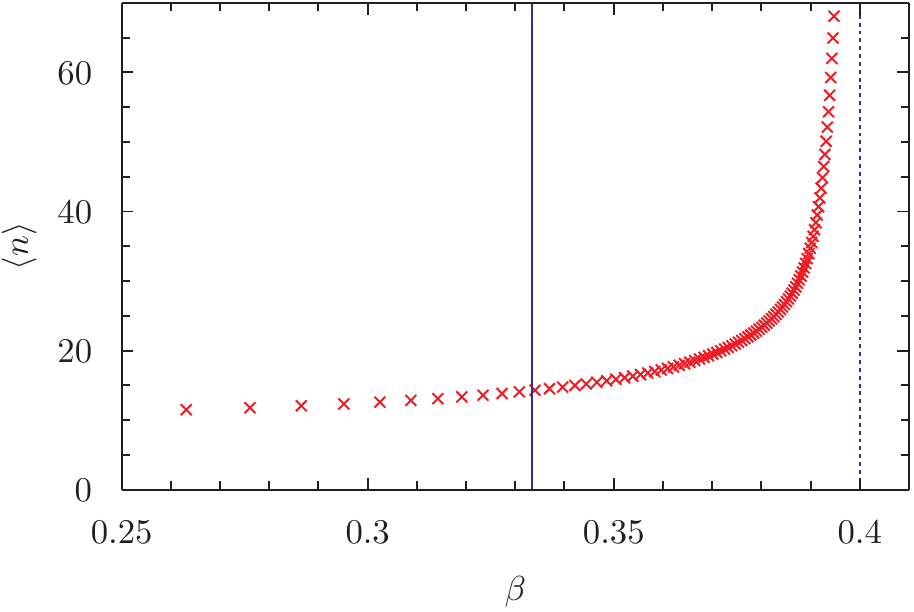}}
  \subfloat[Presentation (\ref{eqnF2})   for $F$  sampled with $\alpha = 2$]{
    \includegraphics[width=0.5\textwidth]{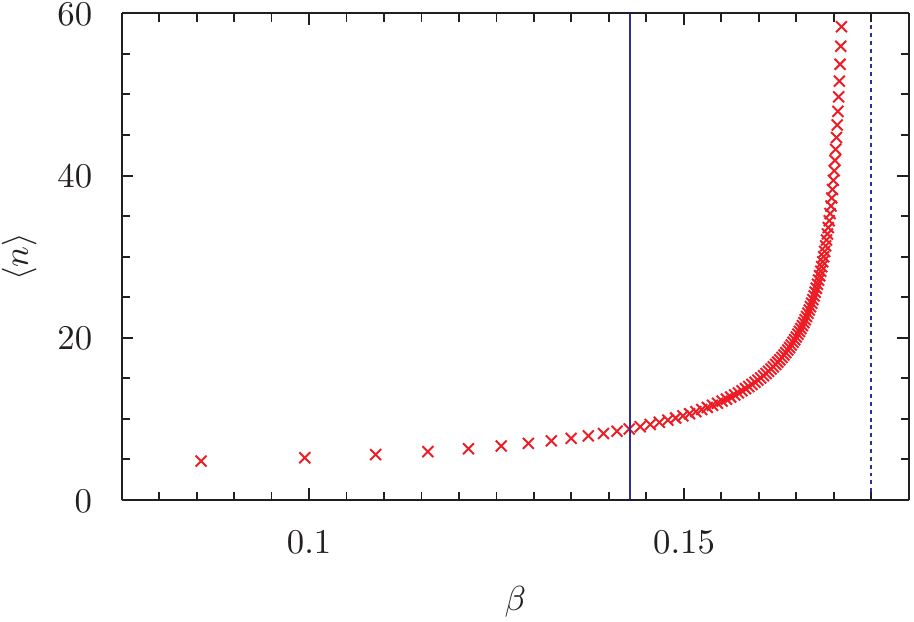}}\\
  \subfloat[Presentation (\ref{eqnF3})   for $F$ sampled with $\alpha = 1$]{
    \includegraphics[width=0.5\textwidth]{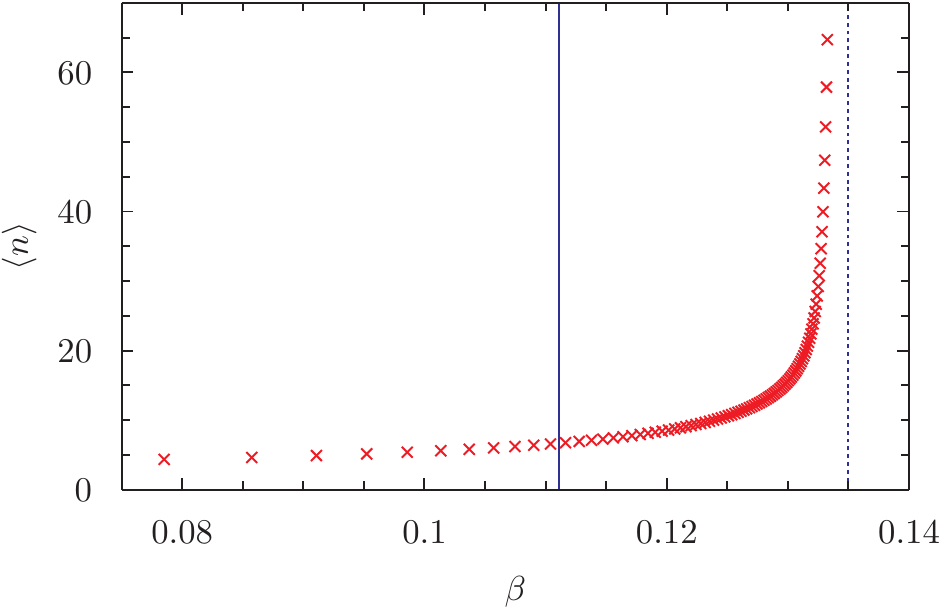}}
  \caption{Mean length of freely reduced trivial words in Thompson's group $F$
at different values of~$\beta$. The solid blue lines indicate the reciprocal of
the cogrowth of amenable groups with $k$ generators~$\beta_c =
\nicefrac{1}{2k-1}$. The dashed blue lines indicate the approximate
location of the vertical asymptote. In each case, we see that the mean length
of trivial words is finite for $\beta$-values at and slightly above~$\beta_c$.}
  \label{fig:thomp plots}
\end{figure}

\begin{figure}[h]
  \centering
  \subfloat[Standard presentation  (\ref{eqnF1})  for $F$  sampled with $\alpha = 0,1,2,3$.]{
    \includegraphics[width=0.5\textwidth]{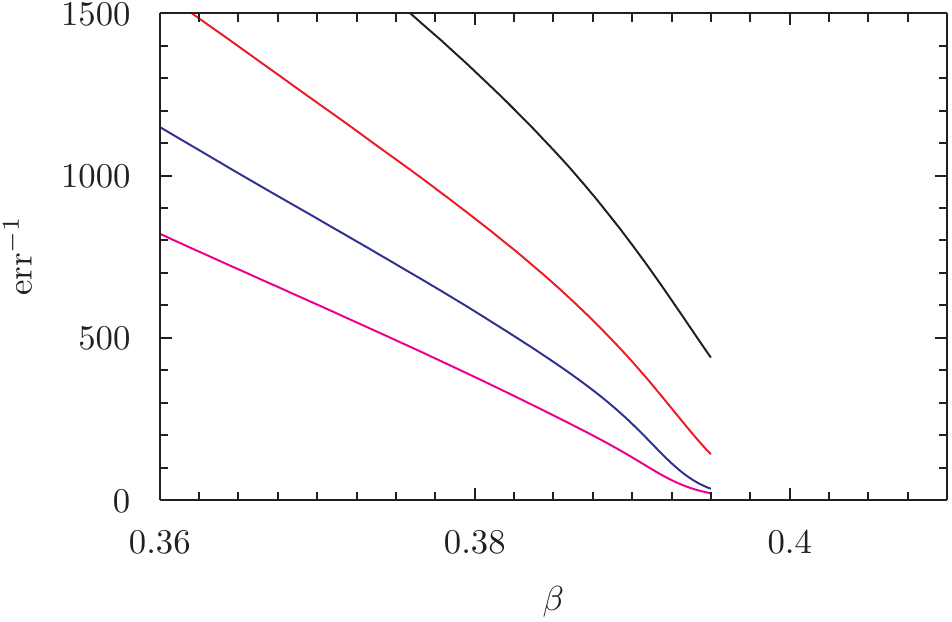}}
  \subfloat[Presentation  (\ref{eqnF2})    for $F$ sampled with $\alpha = 0,1,2,3$.]{
    \includegraphics[width=0.5\textwidth]{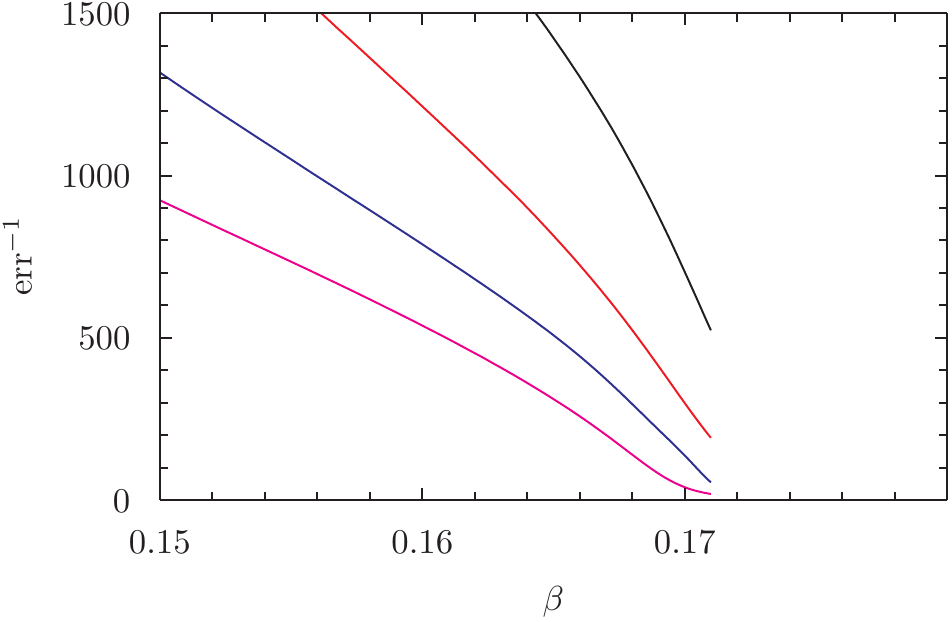}}\\
  \subfloat[Presentation  (\ref{eqnF3})   for $F$ sampled with $\alpha = 0,1,2,3$.]{
    \includegraphics[width=0.5\textwidth]{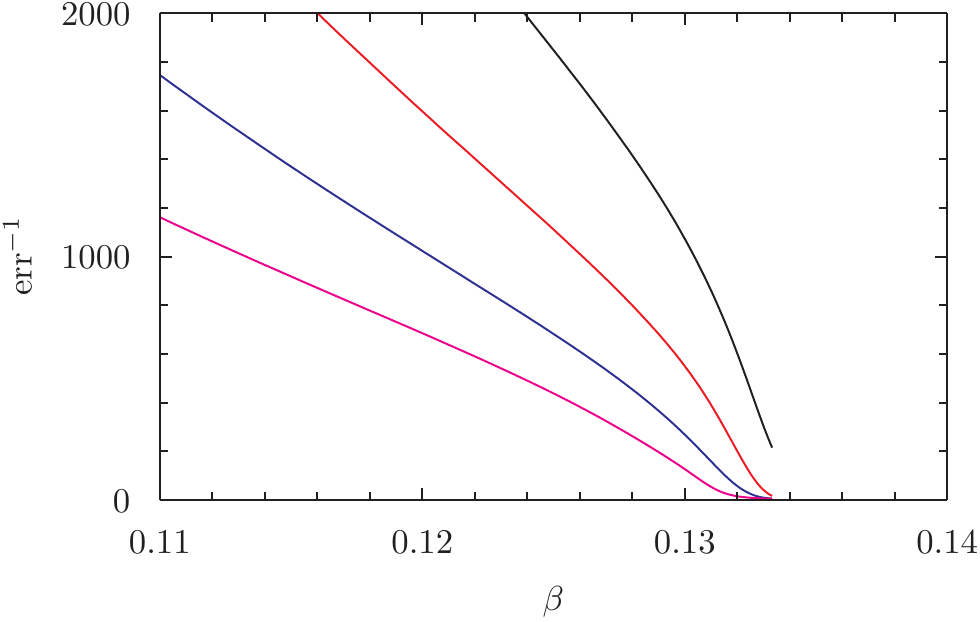}}
  \caption{The reciprocal of the estimated standard error of the mean length as
a function of $\beta$ for the three presentations of Thompson's group. In each
plots we show 4 curves corresponding to simulations at $\alpha=0,1,2,3$
(anti-clockwise from top). Extrapolating these curves leads to estimates of
$\beta_c$ of $0.395\pm0.005$, $0.172\pm 0.002$, $0.134\pm0.004$. These are all
well above the values of amenable groups.}
  \label{fig:thomp err plots}
\end{figure}

\bibliographystyle{plain}
\bibliography{refs-cogrowth}

\end{document}